\titlespacing*{\section}{0pt}{\baselineskip}{0pt}
\titlespacing*{\subsection}{0pt}{0.5\baselineskip}{0pt}
\numberwithin{equation}{section}
\numberwithin{figure}{section}
\setlist{leftmargin=0.8cm,topsep=0pt,itemsep=-2pt}
\setlist[enumerate]{label=\rm{(\roman*)}}
\g@addto@macro\normalsize{%
  \setlength\abovedisplayskip{0.4\baselineskip plus 0.4\baselineskip}
  \setlength\belowdisplayskip{0.4\baselineskip plus 0.4\baselineskip}
  \setlength\abovedisplayshortskip{-0.3\baselineskip}
  \setlength\belowdisplayshortskip{0.4\baselineskip plus 0.4\baselineskip}
}
\renewenvironment{thebibliography}[1]
{ \begin{oldthebibliography}{#1}
  \setlength{\parskip}{0pt}
  \setlength{\itemsep}{2pt plus 0.3ex}
  \bgroup\footnotesize }
{ \egroup \end{oldthebibliography} }
\newtheoremstyle{shdefinition}{\topsep}{0.5\topsep}{}{}{\bfseries}{.}{0.5em}{} 
\newtheoremstyle{shplain}{\topsep}{0.5\topsep}{\itshape}{}{\bfseries}{.}{0.5em}{} 
\theoremstyle{shplain}
\newtheorem{thm}{Theorem}
\newtheorem{cor}[thm]{Corollary}
\newtheorem{theorem}{Theorem}[section]
\newtheorem{lemma}[theorem]{Lemma}
\newtheorem{corollary}[theorem]{Corollary}
\newtheorem{proposition}[theorem]{Proposition}
\theoremstyle{shdefinition}
\newtheorem{que}{Question}
\newtheorem*{notation*}{Notation}
\newtheorem*{acknowledgements}{Acknowledgements}
\newcommand{\<}{\langle}
\renewcommand{\>}{\rangle}
\renewcommand{\leq}{\leqslant}
\renewcommand{\geq}{\geqslant}
\renewcommand{\:}{\colon}
\newcommand{\R}{\mathbb{R}}
\newcommand{\Z}{\mathbb{Z}}
\newcommand{\N}{\mathbb{N}}
\newcommand{\Sone}{\mathbb{S}^1}
\DeclareMathOperator{\support}{Supt}
\newcommand{\supt}[1]{\support(#1)}
\DeclareMathOperator{\fixset}{Fix}
\newcommand{\fix}[1]{\fixset(#1)}
\begin{document}

\begin{center} 
\textbf{\LARGE \boldmath Thompson's group $T$ is $\frac{3}{2}$-generated} \\[11pt]
{\Large Collin Bleak, Scott Harper, Rachel Skipper } \\[22pt]
\end{center}

\begin{center}
\begin{minipage}{0.8\textwidth}
\small Every finite simple group can be generated by two elements and, in fact, every nontrivial element is contained in a generating pair. Groups with this property are said to be $\frac{3}{2}$-generated, and the finite $\frac{3}{2}$-generated groups were recently classified. Turning to infinite groups, in this paper, we prove that the finitely presented simple group $T$ of Thompson is $\frac{3}{2}$-generated. Moreover, we exhibit an element $\zeta\in T$ such that for any nontrivial $\alpha \in T$, there exists $\gamma \in T$ such that $\langle \alpha,\zeta^\gamma\rangle = T$.\par
\end{minipage}
\end{center}

\section{Introduction} \label{s:intro}
By the classification of finite simple groups, every finite simple group is $2$-generated. Other than for the then-yet-undiscovered sporadic groups, this was proved by Steinberg in 1962 \cite{steinberg62}. In that paper, Steinberg asked whether every nontrivial element of a finite simple group is contained in a generating pair, a property known as \emph{$\frac{3}{2}$-generation}. In 2000, Guralnick and Kantor answered this question affirmatively \cite{gk00}. Moreover, completing a long line of research, in 2021, Burness, Guralnick and Harper completely characterised the finite $\frac{3}{2}$-generated groups as the finite groups of which every proper quotient is cyclic \cite{bgh21}. While there are finitely generated infinite simple groups that are not $2$-generated (see \cite{GubaSimple86}), there are no finitely presented simple groups which are known to not be $2$-generated.  In this paper we establish that an important infinite simple group is $\frac{3}{2}$-generated in a strong sense.

In his notes of 1965 \cite{ThompsonHandwritten}, Richard J. Thompson introduces  three infinite groups $F < T < V$ and he shows that these groups are all finitely presented and further that $T$ and $V$ are simple. This made $T$ and $V$ the first known examples of finitely presented infinite simple groups, and for the next three decades all examples of finitely presented infinite simple groups were closely related to these groups of Thompson.  The groups $F$, $T$, and $V$ also have  characterisations (respectively) as particular groups of homeomorphisms of the unit interval, the circle, and Cantor space, and much analysis of these groups takes on a dynamical flavour due to these realisations. Since their introduction, these groups have appeared in a variety of mathematical contexts such as in studying the word problem for groups \cite{thompsonmckenzie, Thompson76}, in homotopy and shape theory \cite{Dydak1,Dydak2,FreydHeller}, in group cohomology \cite{brown3,brown4,browngeoghegan1}, and even in dynamical systems and analysis \cite{GhysSergiescu}. There are now hundreds of research articles strongly connected to these groups, and all three groups have hard questions associated with them. It is a common phenomenon in Thompson group theory that results which are known to hold for $F$ or $V$ resist proof for $T$. A standard expository introduction to the theory of the Thompson groups is Cannon, Floyd and Parry's paper \cite{cfp}. 

Donoven and Harper proved that $V$, along with the related Brin--Thompson groups $nV$ and Higman--Thompson groups $V_n$ and $V_n'$, is $\frac{3}{2}$-generated \cite{dh20}. This gave the first examples of infinite non-cyclic $\frac{3}{2}$-generated groups other than Tarski monsters. The only other examples that have since been found are due to Cox \cite{cox22}. These are the subgroups $\< \mathrm{Alt}(\Z), t \>$ and $\< \mathrm{Alt}(\Z), t^2 \>$ of $\mathrm{Sym}(\Z)$ where $t\:\Z \to \Z$ is the translation $n \mapsto n+1$. (Cox also shows that if $k \geq 3$, then $\< \mathrm{Alt}(\Z), t^k \>$ is a $2$-generated infinite group that is not $\frac{3}{2}$-generated but all of whose proper quotients are cyclic, a necessary condition for $\frac{3}{2}$-generation, which is sufficient for finite groups.)

In this paper, we focus on the group $T$. Identifying the circle $\Sone$ with $\R/\Z$, recall that Thompson's group $T$ is the group of piecewise linear homeomorphisms of $\Sone$ which preserve $\Z[\frac{1}{2}]/\Z$, admit finitely many breakpoints, have all gradients as integral powers of two, and have all breakpoints in $\Z[\frac{1}{2}]/\Z$. While $T$ is known to be $2$-generated (see \cite{FunarKapoudjian,LochakSchneps}), our main result is that $T$ satisfies a well-studied strengthening of the standard $\frac{3}{2}$-generation property.

\begin{thm} \label{t:theorem}
There exists an element $\zeta\in T$ such that for every nontrivial $\alpha \in T$ there exists $\gamma \in T$ such that $\< \alpha, \zeta^\gamma \> = T$.
\end{thm}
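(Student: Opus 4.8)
The plan follows the broad strategy by which Donoven and Harper established the analogous statement for $V$ \cite{dh20}: reduce Theorem~\ref{t:theorem} to a richness criterion for subgroups of $T$, and then, by a case analysis on the dynamics of $\alpha$, choose $\zeta$ and the conjugator $\gamma$ so that $H:=\<\alpha,\zeta^\gamma\>$ meets that criterion. For the criterion I would aim for an \emph{engine lemma} of roughly the following shape: if $H\leq T$ acts transitively on the cyclically ordered tuples of dyadic points of $\Sone$ that arise as vertex sets of standard dyadic subdivisions of the circle, for every length, and $H$ contains a suitable torsion element realising the requisite cyclic shift, then $H=T$. Morally this should hold because from such transitivity one can conjugate a single copy of Thompson's group $F$ supported on a proper dyadic arc (which $H$ will be arranged to contain) around the circle to obtain copies of $F$ supported on a ``revolving cover'' of $\Sone$ --- finitely many dyadic arcs with nonempty consecutive overlaps, winding once around --- and $T$ is generated by the copies of $F$ on any such cover, essentially the standard way of exhibiting $T$ as generated by local pieces. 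Pinning down a transitivity hypothesis that is simultaneously attainable in the main argument and strong enough to force $H=T$ is the first of the two principal technical hurdles.

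For $\zeta$ I would take an element supported on a proper dyadic arc $I_{0}\subsetneq\Sone$ whose restriction to $I_{0}$ is ``hyperbolic'' --- one endpoint attracting, the other repelling --- so that conjugating a small-support bump by powers of $\zeta$ sweeps it across $I_{0}$; the point is that $\<\zeta\>$ together with one auxiliary bump can generate a copy of $F$ on a subarc of $I_{0}$. The conjugates $\zeta^{\gamma}$ then range over elements supported on arbitrarily small dyadic arcs anywhere on $\Sone$, and (taking $\gamma$ to expand $I_{0}$) also over elements of large support. The heart of the argument is: given a nontrivial $\alpha$, choose a dyadic arc $A$ with $\alpha(A)\cap A=\varnothing$ --- such an $A$ exists because $\alpha\neq1$ moves some point, hence some small dyadic arc, off itself --- and then choose $\gamma$ so that $\zeta^{\gamma}$ is supported inside $A$. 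Using $\alpha$ to carry $\zeta^{\gamma}$ off its own support, the conjugates and commutators $\zeta^{\gamma\alpha^{k}}$, $[\zeta^{\gamma},(\zeta^{\gamma})^{\alpha^{\pm1}}],\dots$ should let one simultaneously build a copy of $F$ on some arc and bootstrap the transitivity demanded by the engine lemma, exploiting the dynamics of $\alpha$ to spread the active region around the circle.

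The case analysis is organised by the dynamical type of $\alpha$. If $\alpha$ has no fixed point then its rotation number is some nonzero $p/q\in\mathbb{Q}/\mathbb{Z}$ (rotation numbers of elements of $T$ being rational), a power $\alpha^{q}\in H$ fixes a point, and since a nontrivial element of $T$ with a fixed point has a dyadic fixed point, we reduce --- provided $\alpha^{q}\neq1$ --- to the case that $\alpha$ itself fixes a dyadic point and is ``$F$-like'' on the complementary arc. The genuinely delicate cases, which I expect to be \textbf{the main obstacle}, are the ``small'' elements: the torsion elements (conjugate in $T$ to rigid-rotation-type elements, so $\alpha^{q}=1$ and one cannot pass to a nontrivial power) and the elements of very small support. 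For these $\zeta^{\gamma}$ must supply almost all of the generating power, so $\gamma$ must be chosen so that the finitely many essentially distinct conjugates of $\zeta^{\gamma}$ one can manufacture using $\alpha$ already suffice for the engine lemma; this is precisely where the germ data built into $\zeta$ must be engineered, and where uniformity --- a single $\zeta$ rather than a conjugacy class depending on $\alpha$ --- has to be verified, by confirming that the arc $I_{0}$ and the local dynamics of $\zeta$ can be repositioned appropriately by $\gamma$ in every case.
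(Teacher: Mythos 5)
Your high-level strategy agrees with the paper in several respects: $T$ should be shown to be generated by copies of $F$ with small support covering the circle (this is essentially the paper's Lemma~\ref{lem:small_support}); $\zeta$ should be a one-bump element with hyperbolic dynamics on a proper dyadic arc (the paper's $\zeta$ has support $(0,\tfrac34)$); and the argument should split along the dynamical type of $\alpha$, with torsion elements the hardest case. You are also right that the Donoven--Harper result for $V$ is the natural model.

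The central mechanism you propose, however, has a gap that looks fatal. You want to pick a dyadic arc $A$ with $A\alpha\cap A=\varnothing$ and choose $\gamma$ so that $\supt{\zeta^\gamma}\subseteq A$, and then exploit ``conjugates and commutators $\zeta^{\gamma\alpha^k}$, $[\zeta^\gamma,(\zeta^\gamma)^{\alpha^{\pm1}}],\dots$.'' But with $\supt{\zeta^\gamma}\subseteq A$ and $A\alpha\cap A=\varnothing$, the elements $\zeta^\gamma$ and $(\zeta^\gamma)^{\alpha^{\pm1}}$ have disjoint supports and therefore \emph{commute}: the commutators you list are all trivial. Worse, in the extreme torsion case, if $\alpha$ has order $n$ and $A$ is small enough that $A, A\alpha, \dots, A\alpha^{n-1}$ are pairwise disjoint, then $\<\alpha,\zeta^\gamma\>$ is a quotient of $\Z\wr(\Z/n\Z)$, nowhere near $T$; if $\alpha$ has infinite order with a fixed point and $A$ sits in a wandering domain, one gets $\Z\wr\Z$. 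The disjoint-support cut-and-paste idea works in $V$ because Cantor space is totally disconnected and one can glue pieces back together freely; on the circle it leaves you with no way to produce nonabelian interaction between the $\alpha$-conjugates of $\zeta^\gamma$, and you offer no mechanism to escape this.

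The paper does essentially the opposite. For infinite-order $\alpha$, it arranges $\zeta^\gamma$ to have \emph{large} support, agreeing on a long orbit segment with a carefully designed $\beta=\alpha\cdot x_0^{\tau\alpha^{-2}}x_1^{\tau}$, so that $\alpha^{-1}\zeta^\gamma$ is a controlled small-support element exposing (after further conjugation to clean up the uncontrolled region) the standard generators $x_0,x_1$ of a copy of $F$; this requires a conjugacy lemma (Lemma~\ref{lem:conjugacy}) that realises any prescribed behaviour of $\zeta^\gamma$ along $k$ hops. For torsion $\alpha$, where no such orbit segment exists and where a power trick is unavailable (exactly the obstacle you correctly flag), the paper instead uses the Belk--Matucci classification of conjugacy in $T$ to put $\alpha$ into one of three explicit normal forms, writes down specific candidate pairs $\kappa_0,\kappa_1\in\<\alpha,\zeta\>$, and verifies $\<\kappa_0,\kappa_1\>=F$ via Golan's generation criterion and the Stallings $2$-core. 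Your sketch contains none of this, and your ``engine lemma'' on transitivity over cyclically ordered dyadic tuples is never pinned down to a verifiable statement; the paper replaces it with the much more concrete Lemma~\ref{lem:small_support}, which only asks for a cover of $\Sone$ by overlapping dyadic arcs with the corresponding $T_{[r_i,s_i]}$ inside the subgroup.
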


\begin{cor} \label{c:theorem}
Thompson's group $T$ is $\frac{3}{2}$-generated.
\end{cor}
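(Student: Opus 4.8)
The plan is to fix once and for all a single explicit element $\zeta \in T$ with well-understood dynamics on the circle and then, given an arbitrary nontrivial $\alpha \in T$, to choose $\gamma$ so that the conjugate $\zeta^\gamma$ is positioned relative to $\alpha$ in a way that forces $H := \< \alpha, \zeta^\gamma \>$ to be all of $T$; Corollary~\ref{c:theorem} is then immediate, as $\{\alpha, \zeta^\gamma\}$ is a generating pair containing $\alpha$. The certificate that a subgroup equals $T$ will be a \emph{largeness criterion} of Higman--Epstein type, to be isolated first: if $H \le T$ contains a nontrivial element whose support lies in a proper dyadic arc of $\Sone$, and $H$ acts on $\Z[\frac{1}{2}]/\Z$ transitively with the stabiliser of a dyadic point $p$ acting transitively on the dyadics of $\Sone \setminus \{p\}$, then conjugates of the small-support element are available on arbitrarily short dyadic arcs, a fragmentation argument shows $H$ absorbs the derived subgroup of $T$, and hence $H = T$ since $T$ is simple. (Equivalently --- and usefully, as the maximal subgroups of $T$ are not classified --- it is enough to locate in $H$ the subgroup generated by all dyadic point stabilisers $T_q$, which is a nontrivial normal subgroup of $T$.)

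For $\zeta$ I would take an element supported on a standard dyadic arc of $\Sone$, say on $[0,\tfrac{1}{2}]$, acting there with one-bump dynamics --- one fixed sub-arc, one attracting/repelling pair of endpoint germs, and gradients we are free to prescribe --- so that on its support $\zeta$ resembles a standard generator of $F$. The virtue of this is that every $T$-conjugate $\zeta^\gamma$ is again supported on a standard dyadic arc $I$ of the same length, can be placed anywhere on the circle, and retains the same rich action on its interior; in particular $\zeta^\gamma$ already supplies the small-support element required by the criterion, so that all the real work lies in forcing the transitivity of $H$.

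The core of the argument is a construction of $\gamma$ by cases on the dynamics of $\alpha$. If $\fix{\alpha} \ne \emptyset$, choose $\gamma$ so that the support $I$ of $\zeta^\gamma$ is a short dyadic arc \emph{straddling} a boundary point $p$ of $\fix{\alpha}$ adjacent to a component $J$ of $\Sone \setminus \fix{\alpha}$: then $\zeta^\gamma$ moves points that $\alpha$ fixes while $\alpha$ moves points that $\zeta^\gamma$ fixes, so $H$ leaves no proper arc of $\Sone$ invariant; and conjugating $\zeta^\gamma$ by powers of $\alpha$ slides pieces of dyadic arcs along $J$ towards $p$, which the one-bump action of $\zeta^\gamma$ then carries across $p$. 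Iterating this, and --- after a preliminary reduction, if necessary, replacing $\alpha$ by an element of $H$ with fewer fixed points --- chaining the resulting moves over all components of $\Sone \setminus \fix{\alpha}$ yields the transitivity of $H$, so the criterion applies. If $\fix{\alpha} = \emptyset$: when $\alpha$ has infinite order it nevertheless has a periodic point (elements of $T$ have rational rotation number), so some nonzero power $\alpha^q$ fixes a point and is nontrivial, and $\< \alpha^q, \zeta^\gamma \> \le H$ reduces us to the previous case; when $\alpha$ has finite order it acts freely, being conjugate to a rotation, and is handled by a direct placement of $\zeta^\gamma$ on a short arc so that overlapping $\alpha$-translates of $\zeta^\gamma$ mix the blocks of the $\alpha$-orbit.

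I expect the main obstacle to be uniformity: the same $\zeta$ must succeed both when $\supt{\alpha}$ is a tiny arc --- so that $\alpha$ does almost nothing towards spreading dyadics around the circle and $H$ risks lying inside a conjugate of $F$, or inside another proper subgroup, unless $\zeta^\gamma$ is positioned exactly right relative to $\supt{\alpha}$ --- and when $\alpha$ moves the whole circle, where the invariance of the cyclic order forbids the naive ping-pong of pairwise disjoint arcs. Reconciling these constraints is what pins down the support length and the gradients of $\zeta$, and verifying that $H$ is transitive in the required sense in each case will be a genuinely dynamical computation --- tracking orbits of dyadic arcs under the interacting maps $\alpha$ and $\zeta^\gamma$ --- rather than a formal deduction.
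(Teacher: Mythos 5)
Your high-level plan — fix one well-placed conjugate $\zeta^\gamma$, split on whether $\alpha$ fixes a point, and reduce to a criterion that certifies $\langle\alpha,\zeta^\gamma\rangle=T$ — does match the paper's overall architecture (the paper uses the same case division: infinite-order $\alpha$ after passing to a power with fixed points, versus finite-order $\alpha$, which necessarily acts freely). But the criterion you state is not established, and the gap it conceals is precisely where all the work lies.

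You claim that once $H=\langle\alpha,\zeta^\gamma\rangle$ contains a nontrivial element $g$ supported in a proper dyadic arc and $H$ acts ``$2$-transitively'' on dyadics, a fragmentation argument gives $H\supseteq T'=T$. As stated, this does not go through. The subgroup $\langle g^{H}\rangle$ is normal in $H$, not in $T$, so simplicity of $T$ cannot be invoked; and your fallback version — ``it is enough to locate in $H$ the subgroup generated by all dyadic point stabilisers $T_q$'' — asks $H$ to contain entire copies of $F$, which is much more than containing a single small-support element $g$ plus transitivity. Passing from ``$H$ contains \emph{some} nontrivial element supported in $[r,s]$'' to ``$H$ contains \emph{all} of $T_{[r,s]}$'' is the genuinely hard step, and no amount of conjugating a single $g$ around and appealing to fragmentation obviously produces a whole $F$-in-the-box. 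The paper makes this step concrete in two quite different ways: for infinite-order $\alpha$, by building two elements $\mu_0,\mu_1\in H$ that restrict to the standard $F$-generators $x_0^\tau,x_1^\tau$ on a target interval and have \emph{disjoint} supports elsewhere, so that $\langle\mu_0,\mu_1\rangle'$ is exactly a boxed $F'$; and for finite-order $\alpha$, by writing down two explicit elements of $H$ and proving they generate an $F$-in-the-box via Golan's Stallings-$2$-core criterion (which is far from a formal consequence of transitivity). Once full boxed copies $T_{[r_i,s_i]}$ are in hand on arcs covering $\Sone$, the paper's Lemma~\ref{lem:small_support} finishes — and that lemma is the rigorous form of the ``normal subgroup of $T$'' observation you gesture at.

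Two smaller points. First, even the transitivity of $H$ on dyadics that your criterion presupposes is not free: you assert it follows by ``chaining moves over the components of $\Sone\setminus\fix\alpha$,'' but when $\supt\alpha$ is a single short arc, or when $\alpha$ is a finite-order rotation-like element of large period, this requires an actual dynamical argument, and you give none. Second, ``handled by a direct placement of $\zeta^\gamma$ ... so that overlapping $\alpha$-translates mix the blocks'' describes the finite-order case as routine, when in fact it is the case the paper treats with the heaviest machinery (the Belk--Matucci conjugacy classification plus Golan's criterion, with a nontrivial verification for each rotation number $1/p$). In short, the proposal identifies the right reduction target but omits the mechanism by which $H$ is forced to contain a full $F$-in-the-box, and the asserted largeness criterion would itself need a proof of comparable difficulty to what it is meant to replace.
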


Theorem~\ref{t:theorem} not only asserts that each nontrivial element $\alpha \in T$ generates with another element $\beta$ but, moreover, that $\beta$ can always be chosen from the same conjugacy class $\zeta^T$, independent of $\alpha$. This much stronger property (known as having \emph{uniform spread at least one}, see below) is shared with all nonabelian finite $\frac{3}{2}$-generated groups with the exception of $\mathrm{Sym}(6)$ \cite[Theorem~3]{bgh21}.

The element $\zeta$ in Theorem~\ref{t:theorem} can be chosen as the one given by the tree pair in Figure~\ref{fig:zeta}. In addition, as we show in Proposition~\ref{prop:infinite}, if we restrict to elements $\alpha$ of infinite order then we can choose $\zeta$ to be any infinite order element, that is to say, for any two infinite order elements $\alpha, \zeta \in T$ there exists $\gamma \in T$ such that $\< \alpha, \zeta^\gamma \> = T$. This naturally raises the following question.

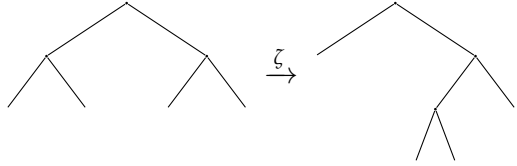
\begin{figure}[t]
  \[
  \begin{tikzpicture}[
      inner sep=0pt,
      baseline=-30pt,
      level distance=20pt,
      level 1/.style={sibling distance=60pt},
      level 2/.style={sibling distance=30pt},
      level 3/.style={sibling distance=15pt}
    ]
    \node (root) [circle,fill] {}
    child {node (0) [circle,fill] {}
      child {node (00) {}}
      child {node (01) {}}}
    child {node (1) [circle,fill] {}
      child {node (10) {}}
      child {node (11) {}}};
  \end{tikzpicture}
  \ \xrightarrow{\,\zeta\,} \
  \begin{tikzpicture}[
      inner sep=0pt,
      baseline=-30pt,
      level distance=20pt,
      level 1/.style={sibling distance=60pt},
      level 2/.style={sibling distance=30pt},
      level 3/.style={sibling distance=15pt}
    ]
    \node (root) [circle,fill] {}
    child {node (0) {}}
    child {node (1) [circle,fill] {}
      child {node (10) [circle,fill] {}
        child {node (100) {}}
        child {node (101) {}}}
      child {node (11) {}}};
  \end{tikzpicture}
\]
{\renewcommand{\thefigure}{1}
\caption{The element $\zeta \in T$} 
\label{fig:zeta}
}
\end{figure}

\begin{que}
Given any element $\tau \in T$ of infinite order and any nontrivial element $\alpha \in T$, does there exist $\gamma \in T$ such that $T = \< \alpha, \tau^{\gamma} \>$?
\end{que}

Even more adventurously, we can ask the following.

\begin{que}
Given any element $\tau\in T$ of order at least three and nontrivial element $\alpha \in T$, does there exist $\gamma \in T$ such that $T=\<\alpha,\tau^{\gamma}\>$?  
\end{que}

The \emph{spread} of a group $G$, written $s(G)$, is the supremum over $k$ such that for any $k$ nontrivial elements $\alpha_1, \dots, \alpha_k \in G$ there exists an element $\beta \in G$ such that $\< \alpha_1, \beta \> = \dots = \< \alpha_k, \beta \> = G$. Therefore, $G$ is $\frac{3}{2}$-generated if and only if $s(G) \geq 1$. It was recently shown that every $\frac{3}{2}$-generated finite group $G$ actually satisfies $s(G) \geq 2$, or said otherwise, no finite group has spread exactly 1 \cite[Corollary~2]{bgh21}. Motivated by this, Donoven and Harper asked whether there exist any infinite groups $G$ with $s(G)=1$ \cite[Question~2]{dh20}. Let us also note that determining the exact spread is difficult in general. For instance, Brenner and Wiegold, in their 1970 paper \cite{bw75} where they introduce spread, prove that $s(\mathrm{Alt}(n)) = 4$ if $n \geq 8$ is even; however, $s(\mathrm{Alt}(n))$ is still unknown for general $n$. This discussion raises our final question.

\begin{que}
What is the spread of $T$? Is it infinite?
\end{que}

Let us also mention the related \emph{uniform spread} of $G$, written $u(G)$, which is the supremum over $k$ for which there exists a conjugacy class $\zeta^G$ such that for any $k$ nontrivial elements $\alpha_1, \dots, \alpha_k \in G$ there exists $\beta \in \zeta^G$ such that $\< \alpha_1, \beta \> = \dots = \< \alpha_k, \beta \> = G$. As noted above, Theorem~\ref{t:theorem} shows that $u(T) \geq 1$, and again it is natural to ask for the precise value of $u(T)$.

We now discuss how we prove our main theorem. There is one common theme to all our proofs but we also approach the two cases where $\alpha$ has infinite and finite order separately, making use of different techniques. The common theme is proving that $\<\alpha, \zeta^\gamma\> = T$ by first covering the circle $\Sone$ by overlapping open intervals and showing that $\<\alpha, \zeta^\gamma\>$ contains the full pointwise stabiliser in $T$ of the complements of each of these intervals (see Lemma~\ref{lem:interval_t}). Such a stabiliser is a topologically conjugated copy of Thompson's group $F$, which we call an $F$-in-the-box.

We prove Theorem~\ref{t:theorem} for infinite order $\alpha$ in Proposition~\ref{prop:infinite}, where we first obtain an approximate $F$-in-the-box (which acts nontrivially elsewhere on the circle) by introducing local versions of standard generators of $F$ via targeted conjugation and then by applying further techniques involving conjugation and commutation to achieve an $F$-in-the-box exactly. We then address the case where $\alpha$ has finite order in Proposition~\ref{prop:finite}. Here we can give an explicit choice of $\alpha$ up to conjugacy, which we then use to write down two elements that we claim generate an $F$-in-the-box. To prove this, we employ a generation criterion of Golan \cite{GolanGeneration} that makes use of the Stalling's $2$-core technology as introduced by Guba and Sapir and later developed by Golan and Sapir \cite{GolanGeneration,GolanSapir,gubaSapirDiagramGroups}; we discuss this criterion in Section~\ref{ss:p_generation}.  To complete the proof in both cases, we use conjugation to find enough copies of our $F$-in-the-box such that the union of their supports covers the circle.

\section{Preliminaries} \label{s:prelims}

This section establishes some groundwork that we will rely on when we prove our main theorem in Section~\ref{s:proofs}. Let us first begin by commenting on our notation, which is mainly standard.

\begin{notation*}
Group actions are on the right, so accordingly $x^y = y^{-1}xy$ and $[x,y] = x^{-1}y^{-1}xy$. We denote the derived subgroup of $G$ by $G'$. If $G$ is a group acting on a set $X$, then for $\gamma \in G$, we write $\fix{\gamma} = \{ x \in X \mid x \gamma = x \}$ and $\supt{\gamma} = X \setminus \fix{\gamma}$. For a function $f\: A \to B$ and a subset $X \subseteq A$, we write $f|_X\: X \to Xf$ for the restriction of $f$ to $X$. We write $\Sone = \R/\Z$ for the unit circle, and for $x \in [0,1)$ we regularly abuse notation by writing $x$ for the coset $x+\Z$; in particular, we will write $ \Z[\frac{1}{2}]$ for the image of the dyadic rationals $\Z[\frac{1}{2}]$ in $\Sone$ as well.
\end{notation*}

\subsection{\boldmath Thompson-like maps and the groups $F$ and $T$} \label{ss:p_interval}

Let $p,q,r,s \in \Z[\frac{1}{2}]$ with $p < q$ and $r < s$. A map $\tau\:(p,q) \to (r,s)$ or $\tau\:[p,q] \to [r,s]$ is \emph{Thompson-like} if $\tau$ is a piecewise-linear homeomorphism with finitely many breakpoints such that all gradients are powers of two and all breakpoints are in $\Z[\frac{1}{2}]$. Let $F_{[r,s]}$ be the group of Thompson-like maps $\tau\:[r,s] \to [r,s]$. \emph{Thompson's group $F$} is defined as $F_{[0,1]}$. In addition, if $[r,s] \subseteq [0,1]$, then we may naturally extend an element of $F_{[r,s]}$ to a map $[0,1] \to [0,1]$ by defining it as the identity on $[0,1] \setminus [r,s]$ and under this identification $F_{[r,s]}$ is the pointwise stabiliser in $F$ of $[0,1] \setminus (r,s)$. We may refer to such a subgroup $F_{[r,s]} \leq F$ as an \emph{$F$-in-the-box} since the graph of any function in $F_{[r,s]}$ acts trivially outside the box $[r,s] \times [r,s] \subseteq [0,1] \times [0,1]$. The following lemma regarding any two $F$-in-the-boxes is well known.

\begin{lemma} \label{lem:interval_f}
Let $p,q,r,s \in \Z[\frac{1}{2}]$ with $p < q$ and $r < s$. Then there exists a Thompson-like map $\tau\:(p,q) \to (r,s)$, and for any such $\tau$ we have $(F_{[p,q]})^\tau = F_{[r,s]}$.
\end{lemma}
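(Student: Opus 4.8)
The plan is to treat the two assertions separately: the existence of a Thompson-like map $(p,q)\to(r,s)$ is the substantive point, while the equality $(F_{[p,q]})^\tau=F_{[r,s]}$ for any such $\tau$ follows formally from closure properties of the class of Thompson-like maps.

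For existence, the key observation is that any interval with dyadic endpoints decomposes into finitely many \emph{standard dyadic intervals}, that is, intervals of the form $[k2^{-n},(k+1)2^{-n}]$. Choosing $N$ with $2^Np,2^Nq,2^Nr,2^Ns\in\Z$, we may write $[p,q]$ as a union of $2^N(q-p)$ consecutive level-$N$ standard dyadic intervals and likewise $[r,s]$ as a union of $2^N(s-r)$ of them. These two counts need not agree, but repeatedly bisecting an interval in whichever subdivision currently has fewer pieces — bisection keeps us within the class of standard dyadic intervals and increases the count by one — we can arrange subdivisions $[p,q]=\bigcup_{i=1}^m I_i$ and $[r,s]=\bigcup_{i=1}^m J_i$ into the same number $m$ of standard dyadic intervals, listed in increasing order. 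Letting $\tau$ agree on each $I_i$ with the unique increasing affine map onto $J_i$, whose gradient $|J_i|/|I_i|$ is a ratio of powers of two and hence a power of two, produces a piecewise-linear increasing homeomorphism $[p,q]\to[r,s]$ with finitely many breakpoints, all of them endpoints of the $I_i$ and so dyadic; its restriction to $(p,q)$ is the required Thompson-like map.

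For the conjugation statement I would first record two routine closure facts: the composition of Thompson-like maps is Thompson-like (gradients multiply, and a breakpoint of a composite is either a breakpoint of the first map or lies in the preimage under the first map of a breakpoint of the second, both dyadic since a Thompson-like map and its inverse carry dyadics to dyadics), and the inverse of a Thompson-like map is Thompson-like. Granting these, fix any Thompson-like $\tau\:(p,q)\to(r,s)$, so $\tau^{-1}$ is Thompson-like too. For $f\in F_{[p,q]}$, viewed as a self-homeomorphism of $(p,q)$ since it fixes the endpoints of $[p,q]$, the element $f^\tau=\tau^{-1}f\tau$ is a composition of Thompson-like maps, hence a Thompson-like self-homeomorphism of $(r,s)$; having finitely many breakpoints it is affine near each end, so extends to $[r,s]$ fixing the endpoints, and thus lies in $F_{[r,s]}$. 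This gives $(F_{[p,q]})^\tau\subseteq F_{[r,s]}$. Applying the same argument to $\tau^{-1}\:(r,s)\to(p,q)$ yields $(F_{[r,s]})^{\tau^{-1}}\subseteq F_{[p,q]}$, equivalently $F_{[r,s]}\subseteq (F_{[p,q]})^\tau$, and the two inclusions give the claimed equality.

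I expect the only real obstacle to be the bookkeeping in the existence step — ensuring one can always match the number of pieces in the two subdivisions while staying within standard dyadic intervals — together with the entirely standard verification that Thompson-like maps and their inverses preserve $\Z[\tfrac12]$, which is what makes the breakpoint tracking in the composition argument go through.
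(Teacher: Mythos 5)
The paper states this lemma as ``well known'' and gives no proof, so there is nothing in the paper to compare against; your argument is a complete and correct proof, and it is the standard one. The two-step structure is exactly right: for existence, equalising the number of standard dyadic pieces in the two subdivisions by bisection and then gluing the unique affine maps (whose gradients are ratios of powers of two, hence powers of two) is precisely the usual construction; for the conjugation identity, the reduction to closure of Thompson-like maps under composition and inversion, applied symmetrically to $\tau$ and $\tau^{-1}$, is the clean way to get both inclusions. One point worth making explicit if you were writing this out in full: the reason a Thompson-like map carries $\Z[\tfrac12]$ into $\Z[\tfrac12]$ (which you invoke to see that breakpoints of a composite are dyadic) is that on each affine piece the map has the form $x\mapsto 2^k x + c$ with $c\in\Z[\tfrac12]$, since the piece sends a dyadic breakpoint to a dyadic breakpoint with a power-of-two slope; this is routine but is the load-bearing fact behind your ``both dyadic'' parenthetical.
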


It is a standard fact (see \cite{cfp}) that the elements of $F'$ can be characterised as the elements of $F$ that act as the identity in a neighbourhood of $0$ and $1$. In particular, we have the following.

\begin{lemma} \label{lem:simple_f}
Let $r,s \in \Z[\frac{1}{2}]$ with $0 < r < s < 1$. Then $F_{[r,s]} \leq F'$.
\end{lemma}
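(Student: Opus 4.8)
The plan is to prove Lemma~\ref{lem:simple_f} directly from the stated characterisation of $F'$, namely that $f \in F'$ if and only if $f$ acts as the identity on a neighbourhood of $0$ and a neighbourhood of $1$ in $[0,1]$. First I would take an arbitrary $f \in F_{[r,s]}$; by the definition of an $F$-in-the-box given just above, such an $f$, viewed as an element of $F$ via the natural extension, acts as the identity on all of $[0,1] \setminus (r,s)$. Since $0 < r < s < 1$, the sets $[0,r)$ and $(s,1]$ are genuine (one-sided) neighbourhoods of $0$ and $1$ respectively in $[0,1]$, and $f$ fixes both pointwise.

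From here the argument is immediate: $f$ fixes pointwise the neighbourhood $[0,r]$ of $0$ and the neighbourhood $[s,1]$ of $1$, so by the characterisation of $F'$ we conclude $f \in F'$. As $f \in F_{[r,s]}$ was arbitrary, $F_{[r,s]} \leq F'$, which is exactly the claim.

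The only point that needs a word of care — and the closest thing to an obstacle — is making sure the quoted characterisation of $F'$ is being invoked in the right direction: we need the implication ``fixes a neighbourhood of each endpoint $\Rightarrow$ lies in $F'$'', i.e. that such elements are in the derived subgroup, not merely that derived-subgroup elements have this property. Both directions are part of the standard fact cited from \cite{cfp}, so there is nothing further to prove here; one simply has to state that we are using the nontrivial containment. I expect no computational difficulty whatsoever, and the proof will amount to a couple of sentences unpacking the definition of $F_{[r,s]}$ and applying the cited characterisation.
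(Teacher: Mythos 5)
Your proposal is correct and is essentially the paper's own argument: the paper states the characterisation of $F'$ (elements acting as the identity near $0$ and $1$) immediately before the lemma and then presents the lemma as an immediate consequence, which is exactly the unpacking you give.
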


We now relate the above definitions to the group $T$. For $(r,s) \subsetneq \Sone$, write $T_{[r,s]}$ for the pointwise stabiliser of $\Sone \setminus (r,s)$ in $T$. Just as we discussed above for $F_{[r,s]}$, we may naturally view the elements of $T_{[r,s]}$ as maps supported on $[r,s]$. The next lemma is straightforward.

\begin{lemma} \label{lem:interval_t}
Let $r,s \in \Z[\frac{1}{2}] \cap \Sone$ with $(r,s) \subsetneq \Sone$. Then there exists a Thompson-like map $\tau\:(0,1) \to (r,s)$ such that $T_{[r,s]} = (F_{[0,1]})^\tau$.
\end{lemma}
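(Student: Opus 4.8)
The plan is to build $\tau$ by developing the proper arc $(r,s)$ into a genuine dyadic subinterval of $\R$, and then to verify the two inclusions $(F_{[0,1]})^\tau \subseteq T_{[r,s]}$ and $T_{[r,s]} \subseteq (F_{[0,1]})^\tau$ by hand, using repeatedly that composing Thompson-like maps produces Thompson-like maps and that a Thompson-like self-map of $(r,s)$ fixing its endpoints extends by the identity to an element of $T$. As a preliminary step, I would remove the possibility that $(r,s)$ wraps around $0 \in \Sone$: since $(r,s) \subsetneq \Sone$ is a proper open arc with dyadic endpoints, its complementary closed arc is nondegenerate, so by rotating the circle through a suitable dyadic rational (an element of $T$, and conjugating by it merely translates $(r,s)$ and transports any witnessing $\tau$ accordingly) we may assume $0 < r < s < 1$, so that $(r,s) \subseteq (0,1)$ is an ordinary subinterval. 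Then Lemma~\ref{lem:interval_f} supplies a Thompson-like map $\tau\:(0,1) \to (r,s)$; it extends to a homeomorphism $[0,1] \to [r,s]$ sending $0 \mapsto r$ and $1 \mapsto s$, and since each of its linear pieces has the form $y = 2^a x + b$ with $b$ dyadic, $\tau$ restricts to a bijection between the dyadic rationals of $(0,1)$ and those of $(r,s)$.

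For $(F_{[0,1]})^\tau \subseteq T_{[r,s]}$: every $f \in F = F_{[0,1]}$ fixes $0$ and $1$, so $\tau^{-1}(f|_{(0,1)})\tau$ is a Thompson-like self-homeomorphism of $(r,s)$ fixing both endpoints --- gradients multiply and remain powers of two, there are only finitely many breakpoints, and dyadicity is preserved throughout. Extending this map by the identity on $\Sone \setminus (r,s)$ produces a homeomorphism of $\Sone$ that is continuous at the seam points $r$ and $s$ (where $\tau^{-1}f\tau$ fixes the value), is piecewise linear with finitely many breakpoints, all of which lie in $\Z[\frac12]$ (those of $\tau^{-1}f\tau$, together with possibly $r$ and $s$), has all gradients powers of two, and preserves $\Z[\frac12]/\Z$; hence it lies in $T$ and fixes $\Sone \setminus (r,s)$ pointwise, i.e.\ it lies in $T_{[r,s]}$. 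Conversely, any $g \in T_{[r,s]}$ fixes $\Sone \setminus (r,s)$ pointwise, hence maps $(r,s)$ onto itself fixing $r$ and $s$, with $g|_{(r,s)}$ Thompson-like; then $f := \tau\,(g|_{(r,s)})\,\tau^{-1}$ is a Thompson-like self-map of $(0,1)$ that extends to $[0,1]$ with $0$ and $1$ fixed, so $f \in F$, and the element of $(F_{[0,1]})^\tau$ attached to $f$ agrees with $g$ on $(r,s)$ and is the identity elsewhere, so it equals $g$. Combining the two inclusions gives the lemma.

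The argument is essentially bookkeeping; the only points that genuinely require attention --- the closest thing here to an obstacle --- are checking that the extension-by-the-identity of a Thompson-like self-map of $(r,s)$ really satisfies all of the defining conditions of $T$ at the seam points $r$ and $s$, and confirming that the wrap-around case is legitimately disposed of by the dyadic rotation at the outset.
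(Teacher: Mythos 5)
Your argument is correct. The paper itself offers no proof of this lemma (it is dismissed as ``straightforward''), so there is nothing in the text to compare against; your write-out --- rotating by a dyadic to kill the wrap-around, invoking Lemma~\ref{lem:interval_f} for the existence of a Thompson-like $\tau\colon(0,1)\to(r,s)$, and then checking both inclusions by composing Thompson-like maps and extending by the identity across the seam points --- is a valid and complete treatment.

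Two minor remarks. First, the reduction does not quite give $0<r<s<1$: the hypotheses permit $(r,s)=\Sone\setminus\{r\}$, whose complementary closed arc is a single point rather than nondegenerate, and after rotating this point to $0$ one gets $r=0$, $s=1$; this case is still fine (take $\tau=\mathrm{id}$), but the strict inequalities you assert do not hold, so you should write $0\leq r<s\leq 1$ or handle the degenerate case separately. Second, once you have reduced to $(r,s)\subseteq(0,1)$, the remainder can be shortened: an element of $T$ fixing $\Sone\setminus(r,s)$ pointwise in particular fixes $0$, hence lies in $F$, where it is exactly the pointwise stabiliser of $[0,1]\setminus(r,s)$; thus $T_{[r,s]}=F_{[r,s]}$, and the desired equality $T_{[r,s]}=(F_{[0,1]})^\tau$ is then immediate from the second assertion of Lemma~\ref{lem:interval_f}. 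Your by-hand verification of the two inclusions essentially reproves that part of Lemma~\ref{lem:interval_f}, which is harmless but redundant.
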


The following lemmas are folklore results in the literature on Thompson groups and, for convenience, we give straightforward proofs.

\begin{lemma}\label{lem:small_support_f}
Let $a,b,c,d \in \Z[\frac{1}{2}] \cap [0,1]$ with $a < b < c < d$. Then $F_{[a,d]} = \< F_{[a,c]}, F_{[b,d]} \>$.
\end{lemma}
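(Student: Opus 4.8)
The plan is to dispatch the easy inclusion $\langle F_{[a,c]}, F_{[b,d]} \rangle \leq F_{[a,d]}$ first: an element of $F_{[a,c]}$ fixes $[0,1] \setminus (a,c) \supseteq [0,1] \setminus (a,d)$ pointwise and so lies in $F_{[a,d]}$, and likewise $F_{[b,d]} \leq F_{[a,d]}$ since $(b,d) \subseteq (a,d)$. The substance is the reverse inclusion, and for this I would show that every $g \in F_{[a,d]}$ factors as $g = hk$ with $h \in F_{[a,c]}$ and $k \in F_{[b,d]}$, after first correcting $g$ on the right by a single element of $F_{[b,d]}$.

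First I would fix a dyadic point $m$ with $b < m < c$ and reduce to the case $bg < c$. Because $g$ is increasing and fixes $a$ and $d$, we have $a < bg < d$; so if $bg \geq c$ then $bg \in (b,d)$. Since $F_{[b,d]}$ is a copy of Thompson's group $F$ by Lemma~\ref{lem:interval_f}, it acts, like $F$ on $(0,1)$, transitively on the dyadic rationals of $(b,d)$, so there is $k_0 \in F_{[b,d]}$ with $(bg)k_0 = m$. Replacing $g$ by $gk_0$ (it is enough to put $gk_0$ in the subgroup) I may then assume $bg < c$.

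Next, with $bg < c$, I would observe that $g|_{[a,b]}$ is a Thompson-like homeomorphism of $[a,b]$ onto $[a, bg]$, its endpoints being dyadic since $g$ preserves $\Z[\frac{1}{2}]$. Because $b < c$, $bg < c$, and $a, b, c, bg$ are all dyadic, I can choose a Thompson-like homeomorphism $[b,c] \to [bg,c]$ and glue it onto $g|_{[a,b]}$, producing a Thompson-like homeomorphism $h$ of $[a,c]$ that agrees with $g$ on $[a,b]$; thus $h \in F_{[a,c]}$. Then $gh^{-1}$ fixes $[0,a]$ pointwise (both $g$ and $h$ do), fixes $[a,b]$ pointwise (as $h = g$ there), and fixes $[d,1]$ pointwise (as $g$ does and $h$ fixes $[c,1] \supseteq [d,1]$); hence $gh^{-1}$ fixes $[0,1] \setminus (b,d)$ pointwise, so $gh^{-1} \in F_{[b,d]}$, and $g = (gh^{-1})h \in \langle F_{[a,c]}, F_{[b,d]} \rangle$.

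The one subtlety — and the only thing needing care — is that $g$ need not send $[a,b]$ inside $[a,c]$, which is exactly what the preliminary adjustment by $k_0$ remedies, using transitivity of $F$ on dyadic rationals transported through Lemma~\ref{lem:interval_f}. Beyond that the argument is just the routine existence and gluing of Thompson-like maps between dyadic-endpoint intervals, so I do not anticipate a genuine obstacle.
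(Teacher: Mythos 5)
Your argument is correct, and it takes a genuinely different route from the paper's at the key normalisation step. The paper replaces $\gamma$ by its inverse if necessary so that $e := b\gamma^{-1}$ satisfies $a < e \leq b < c$; it then builds $\alpha \in F_{[a,c]}$ agreeing with $\gamma$ on the variable-length interval $[a,e]$, using that $[a,e]\gamma = [a,b] \subseteq [a,c]$, and checks that $\alpha^{-1}\gamma$ fixes $[0,b] \cup [d,1]$ pointwise. You instead correct $g$ on the right by an element $k_0 \in F_{[b,d]}$, chosen via transitivity of $F$ on the dyadic rationals of $(0,1)$ (transported to $F_{[b,d]}$ by Lemma~\ref{lem:interval_f}), to force $bg < c$, and then match $g$ on the fixed interval $[a,b]$. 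Both routes are sound. The paper's inversion trick is slightly more economical: it needs only the existence of Thompson-like maps between dyadic intervals and yields a clean two-factor decomposition $\gamma = \alpha\beta$. Your correction step additionally invokes the transitivity fact, and after uncorrecting you obtain a three-factor product in $F_{[b,d]} \cdot F_{[a,c]} \cdot F_{[b,d]}$; on the other hand, your mechanism is arguably more transparent, and it does not rely on the symmetry that makes passing to $\gamma^{-1}$ useful.
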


\begin{proof}
Let $\gamma\in F_{[a,d]}$. We will show that $\gamma = \alpha\beta$ for some elements $\alpha \in F_{[a,c]}$ and $\beta \in F_{[b,d]}$. By replacing $\gamma$ by its inverse if necessary, we may assume that $e = b\gamma^{-1}$ satisfies $a < e \leq b < c$. Since $e < c$, we may fix an element $\alpha \in F_{[a,c]}$ such that $\alpha|_{[a,e]} = \gamma|_{[a,e]}$. Since $\alpha|_{[a,e]} = \gamma|_{[a,e]}$, for all $x \in (a,b) = (a,e\gamma) = (a,e\alpha)$ we have $x\alpha^{-1}\gamma = 1$. Therefore, noting that $\alpha^{-1}, \gamma \in F_{[a,d]}$, we have $\beta = \alpha^{-1}\gamma \in F_{[b,d]}$. Now $\gamma = \alpha\beta$ with $\alpha \in F_{[a,c]}$ and $\beta \in F_{[b,d]}$, so $\gamma \in \< F_{[a,c]}, F_{[b,d]} \>$, as claimed.
\end{proof}

\begin{corollary} \label{cor:small_support_f}
Let $k \geq 2$ and let $[r_1,s_1], \dots, [r_k,s_k]$ be dyadic intervals such that $\cup_{i=1}^k(r_i,s_i)=(r,s)$. Then $F_{[r,s]} = \< F_{[r_1,s_1]}, \dots, F_{[r_k,s_k]} \>$.
\end{corollary}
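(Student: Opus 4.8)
The plan is to deduce the corollary from Lemma~\ref{lem:small_support_f} by a straightforward induction on $k$, where the base case $k=1$ is trivial (reading $\< F_{[r_1,s_1]} \> = F_{[r_1,s_1]} = F_{[r,s]}$); in fact I would prove the statement for every $k \geq 1$.

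After relabelling, we may assume $r_1 \leq r_2 \leq \dots \leq r_k$. An elementary argument then shows that every partial union $\cup_{i=1}^{j}(r_i,s_i)$ is again an open interval: if some such union had a gap at a point $z$, then $z \in (r,s)$, so $z \in (r_m,s_m)$ for some $m > j$, which forces $r_m \leq r_j$ and contradicts the ordering. The endpoints of $\cup_{i=1}^{j}(r_i,s_i)$ are $r_1$ and $\max_{i \leq j} s_i$, both dyadic, so at each stage $j$ this union is a dyadic interval $(r_1, t_j)$ and $F_{[r_1,t_j]}$ is a genuine $F$-in-the-box; moreover $\cup_{i=1}^{k}(r_i,s_i) = (r,s)$ by hypothesis.

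For the inductive step (with $k \geq 2$), write $\cup_{i=1}^{k-1}(r_i,s_i) = (r',s')$ with $r', s'$ dyadic. The inductive hypothesis applied to $[r_1,s_1],\dots,[r_{k-1},s_{k-1}]$ gives $F_{[r',s']} = \< F_{[r_1,s_1]},\dots,F_{[r_{k-1},s_{k-1}]} \>$, so it suffices to prove the two-interval identity $F_{[r,s]} = \< F_{[r',s']}, F_{[r_k,s_k]} \>$, where $(r,s) = (r',s') \cup (r_k,s_k)$. If one of these two intervals contains the other, then one of the two $F$-in-the-boxes already equals $F_{[r,s]}$ and there is nothing to do. Otherwise, since $r' = r_1 \leq r_k$ and the union is connected, the four endpoints form a strict chain $r' < r_k < s' < s_k$ (in particular $s = s_k$), and Lemma~\ref{lem:small_support_f} applied with $(a,b,c,d) = (r',r_k,s',s_k)$ yields precisely $F_{[r',s_k]} = \< F_{[r',s']}, F_{[r_k,s_k]} \>$. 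The reverse inclusion holds throughout since $F_{[r_i,s_i]} \leq F_{[r,s]}$ for every $i$.

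There is essentially no mathematical difficulty here, so the closest thing to an obstacle is the bookkeeping in the topological reduction: one must check that the partial unions are genuine dyadic intervals, so that Lemma~\ref{lem:small_support_f} is applicable at each step, and one must separate out the degenerate two-interval configuration in which one interval contains the other, where Lemma~\ref{lem:small_support_f} cannot be quoted with a strict chain $a < b < c < d$ but the desired identity is immediate.
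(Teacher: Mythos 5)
Your proof is correct and follows the approach the paper intends: the corollary is left as an immediate inductive consequence of Lemma~\ref{lem:small_support_f}, and your write-up supplies exactly the required bookkeeping (sorting by left endpoint, connectivity of the partial unions, dyadic endpoints, and the degenerate containment case). One small slip in the connectivity argument: you should conclude $r_m < r_j$ rather than $r_m \leq r_j$ (the latter does not contradict the chosen ordering), which does hold since any $i_1 \leq j$ with $(r_{i_1},s_{i_1})$ meeting the region to the right of the gap must have $r_{i_1} \geq z$, giving $r_j \geq r_{i_1} \geq z > r_m$.
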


\begin{corollary}\label{cor:repartitioning}
Let $\{(r_i, s_i)\mid 1 \leq i \leq m \}$ and $\{(u_j, v_j) \mid 1\leq j \leq n \}$ be sets of dyadic intervals in $[0,1]$ such that
$
\bigcup_{i=1}^m (r_i, s_i)=\bigcup_{j=1}^n (u_j, v_j).
$
Then $\< F_{[r_i, s_i]} \mid 1\leq i \leq m\>=\< F_{[u_j, v_j]} \mid 1\leq j \leq n\>.$
\end{corollary}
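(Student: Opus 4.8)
The plan is to reduce both sides to a single canonical group: the one generated by the $F$-in-the-boxes over the connected components of the common open set
$U := \bigcup_{i=1}^m (r_i,s_i) = \bigcup_{j=1}^n (u_j,v_j)$.
Being a finite union of open intervals, $U$ has finitely many connected components, say $(a_1,b_1),\dots,(a_p,b_p)$, and the first step is to check that these have dyadic endpoints, so that each $F_{[a_\ell,b_\ell]}$ is defined. For this, if $(a,b)$ is a component and $x_k \to a^{+}$ with $x_k \in U$, then each $x_k$ lies in some $(r_i,s_i)$; since $(r_i,s_i)$ is connected, contained in $U$, and meets the component $(a,b)$, it must lie inside $(a,b)$. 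As only finitely many indices occur, some fixed $i$ works for infinitely many $k$, and then $r_i \leq a$ (from $x_k > r_i$, $x_k \to a$) and $r_i \geq a$ (from $(r_i,s_i)\subseteq(a,b)$), so $a = r_i \in \Z[\frac12]$; the same argument at $b$ gives $b \in \Z[\frac12]$.

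The heart of the proof is the claim that $\langle F_{[r_i,s_i]} \mid 1\leq i\leq m\rangle = \langle F_{[a_\ell,b_\ell]} \mid 1\leq \ell\leq p\rangle$. For the forward inclusion, each $(r_i,s_i)$ is connected and contained in $U$, hence contained in a single component $(a_\ell,b_\ell)$; since $[r_i,s_i]\subseteq[a_\ell,b_\ell]$, extending by the identity shows $F_{[r_i,s_i]}\leq F_{[a_\ell,b_\ell]}$, and the inclusion follows. For the reverse inclusion, fix a component $(a_\ell,b_\ell)$ and set $I_\ell = \{\, i \mid (r_i,s_i)\subseteq (a_\ell,b_\ell)\,\}$. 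This is nonempty, and in fact $\bigcup_{i\in I_\ell}(r_i,s_i) = (a_\ell,b_\ell)$, because every point of $(a_\ell,b_\ell)\subseteq U$ lies in some $(r_i,s_i)$, which---being connected and meeting the component---must itself sit inside $(a_\ell,b_\ell)$, i.e.\ $i\in I_\ell$. If $|I_\ell|\geq 2$, Corollary~\ref{cor:small_support_f} gives $F_{[a_\ell,b_\ell]} = \langle F_{[r_i,s_i]} \mid i\in I_\ell\rangle$; if $|I_\ell|=1$ this identity is immediate. Either way $F_{[a_\ell,b_\ell]}$ lies in $\langle F_{[r_i,s_i]} \mid 1 \leq i \leq m\rangle$, which proves the claim.

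Finally, since the second family $\{(u_j,v_j)\}$ has the same union $U$, it has the same connected components, so applying the claim verbatim yields $\langle F_{[u_j,v_j]} \mid 1\leq j\leq n\rangle = \langle F_{[a_\ell,b_\ell]} \mid 1\leq \ell\leq p\rangle$ as well; comparing the two identities gives the corollary. I do not expect a serious obstacle here: the only points needing attention are the dyadicity of the component endpoints and the degenerate case $|I_\ell|=1$, in which Corollary~\ref{cor:small_support_f} does not literally apply but the required equality is trivial.
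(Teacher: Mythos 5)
Your proof is correct. The paper states this corollary without an explicit proof, treating it as immediate from Corollary~\ref{cor:small_support_f}; your argument supplies exactly the intended bridge: pass to the connected components of the common union $U$ (checking, as you do, that their endpoints are dyadic), show each side of the claimed equality equals $\< F_{[a_\ell,b_\ell]} \mid 1 \le \ell \le p \>$ by applying Corollary~\ref{cor:small_support_f} within each component, and handle the trivial $|I_\ell|=1$ case separately. This matches the approach the authors clearly have in mind.
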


We will call the set $\{(u_j, v_j) \mid 1\leq j \leq n \}$ from Corollary~\ref{cor:repartitioning} a \emph{repartitioning} of $\bigcup_{i=1}^m (r_i, s_i)$ as our usage below is similar in spirit to how partitions of unity are employed.

\begin{lemma} \label{lem:small_support}
Let $k \geq 2$ and let $[r_1,s_1], \dots, [r_k,s_k]$ be dyadic intervals such that $\cup_{i=1}^k(r_i,s_i)=\Sone$. Then $T = \< T_{[r_1,s_1]}, \dots, T_{[r_k,s_k]} \>$.
\end{lemma}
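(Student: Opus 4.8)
The plan is to prove that $H := \langle T_{[r_1,s_1]}, \dots, T_{[r_k,s_k]} \rangle$ equals $T$ by first showing that $H$ contains the stabiliser $\mathrm{Stab}_T(z)$ in $T$ of every dyadic point $z \in \Sone$, and then deducing $H = T$ by a one-line orbit--stabiliser argument together with the classical transitivity of Thompson's group $F$ on the dyadic rationals. Throughout I will use the trivial observation that if $(u,v) \subseteq (r_i,s_i)$ for some $i$, then $T_{[u,v]} \leq T_{[r_i,s_i]} \leq H$, since a homeomorphism fixing $\Sone \setminus (u,v)$ pointwise certainly fixes the smaller set $\Sone \setminus (r_i,s_i)$ pointwise.

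The first and main step is to prove that $H \geq \mathrm{Stab}_T(z)$ for every dyadic $z \in \Sone$. The idea is to pull the given cover of $\Sone$ back through the identification provided by Lemma~\ref{lem:interval_t}. Fix a dyadic $z$, and fix a Thompson-like map $\tau\:(0,1) \to \Sone \setminus \{z\}$ with $\mathrm{Stab}_T(z) = T_{[\Sone \setminus \{z\}]} = (F_{[0,1]})^\tau$, as in Lemma~\ref{lem:interval_t} applied to the arc $\Sone \setminus \{z\}$. Since $\bigcup_i (r_i,s_i) = \Sone$, the sets $(r_i,s_i) \cap (\Sone \setminus \{z\})$ cover $\Sone \setminus \{z\}$; each of them is a union of at most two open dyadic arcs contained in $\Sone \setminus \{z\}$, so — using that the Thompson-like maps $\tau$ and $\tau^{-1}$ send dyadic points to dyadic points — their $\tau^{-1}$-images form a finite family $I_1, \dots, I_N$ of dyadic subintervals of $(0,1)$ with $\bigcup_\ell I_\ell = (0,1)$ and with $\tau(I_\ell) \subseteq (r_{i(\ell)}, s_{i(\ell)})$ for a suitable index $i(\ell)$. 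Corollary~\ref{cor:small_support_f} gives $F_{[0,1]} = \langle F_{I_1}, \dots, F_{I_N} \rangle$, and conjugating this identity by $\tau$ gives $\mathrm{Stab}_T(z) = \langle T_{[\tau(I_1)]}, \dots, T_{[\tau(I_N)]} \rangle$. Since each $\tau(I_\ell)$ is contained in some $(r_i,s_i)$, the opening observation gives $T_{[\tau(I_\ell)]} \leq H$, and hence $\mathrm{Stab}_T(z) \leq H$.

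To finish, let $t \in T$ be arbitrary and put $p := 0t$, which is a dyadic point; choose a dyadic point $q \notin \{0, p\}$. By Lemma~\ref{lem:interval_t} the subgroup $\mathrm{Stab}_T(q)$ is a topologically conjugated copy of $F = F_{[0,1]}$ and so acts transitively on the dyadic points of $\Sone \setminus \{q\}$; as $0$ and $p$ are such points, there is $h \in \mathrm{Stab}_T(q) \leq H$ with $0h = p$. Then $th^{-1}$ fixes $0$, so $th^{-1} \in \mathrm{Stab}_T(0) \leq H$ and therefore $t = (th^{-1})h \in H$. As $t$ was arbitrary, $H = T$.

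I expect the main technical point to be the correct transfer of the $F$-generation statement (Corollary~\ref{cor:small_support_f}) into $T$ in the second step: one must check that the conjugates $(F_{I_\ell})^\tau$ genuinely lie inside $T$ rather than merely inside the full homeomorphism group of $\Sone$, and that this conjugation carries $F_{I_\ell}$ onto $T_{[\tau(I_\ell)]}$. Both of these hold precisely because $\tau$ is Thompson-like, and both are already implicit in the formulation of Lemma~\ref{lem:interval_t}. The remaining verifications — that the $\tau^{-1}$-images of the pieces of $(r_i,s_i) \cap (\Sone \setminus \{z\})$ really do form finitely many dyadic subintervals of $(0,1)$ whose union is $(0,1)$ (including the pieces that abut $z$, which map to subintervals abutting $0$ or $1$), and the standard transitivity of $F_{[0,1]}$ on $\Z[\frac{1}{2}] \cap (0,1)$ — are routine.
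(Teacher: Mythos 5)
Your proof is correct, and it takes a genuinely different route from the paper. The paper argues directly with an arbitrary $\gamma \in T$: if $\gamma$ has no fixed point, it first builds all dyadic rotations inside $H$ (by correcting a small rotation on one patch and pushing the error into the remaining patches via Lemma~\ref{lem:small_support_f}), reducing to the case where $\gamma$ has a fixed point; it then splits such a $\gamma$ as a product $\alpha\beta$ with $\alpha,\beta$ supported in a repartitioned pair of patches, using Corollary~\ref{cor:repartitioning}. Your argument instead establishes the stronger intermediate claim that $H$ contains $\mathrm{Stab}_T(z)$ for every dyadic $z$, by conjugating the whole problem back to $F_{[0,1]}$ via the identification $\mathrm{Stab}_T(z)=(F_{[0,1]})^\tau$ of Lemma~\ref{lem:interval_t} and invoking Corollary~\ref{cor:small_support_f} for the pulled-back cover; you then finish with an orbit--stabiliser reduction using transitivity of $F$ on dyadics. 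Your route avoids the paper's case split and the explicit rotation construction, at the cost of relying on the transitivity fact (standard, but not restated in the paper) and of being slightly more careful about the bookkeeping at the point $z$: the arcs $(r_i,s_i)\cap(\Sone\setminus\{z\})$ may split into two pieces abutting $z$, and one needs that there are at least two pulled-back intervals so Corollary~\ref{cor:small_support_f} applies --- both of which you correctly flag and which do hold, since $\tau^{\pm1}$ preserve dyadics and no proper open arc $(r_i,s_i)$ can contain $\Sone\setminus\{z\}$. The paper's version, by contrast, produces an explicit two-term factorisation of each element, which is a bit more concrete but requires the repartitioning corollary and the fixed-point dichotomy.
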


\begin{proof}
Let $\gamma \in T$ be nontrivial and write $G = \< T_{[r_1,s_1]}, \dots, T_{[r_k,s_k]} \>$.  We will show that $\gamma\in G$. First assume that $\fix{\gamma}=\emptyset$. 

Since $\cup_{i=1}^k(r_i,s_i)=\Sone$, we claim that for all $a \in \Z[\frac{1}{2}]$ such that $|a|$ is sufficiently small, $G$ contains the rotation $\rho_a\:\Sone \to \Sone$ defined as $x \mapsto x+a$. To see this, let $a \in \Z[\frac{1}{2}]$ be small enough such that the graph of $\rho_a$ is fully contained in the union of rectangles $\cup_{i=1}^k[r_i,s_i]^2$ within $\Sone\times\Sone$. Now let $\tau\in T_{[r_1,s_1]}$ be an element with inverse $\tau^{-1}$ having graph over $[r_1,s_1]$ agreeing with the graph of $\rho_a$, except in an $\epsilon$-neighbourhood of the ends $r_1$ and $s_1$. By choosing $\epsilon>0$ small enough, the support of $\phi = \rho_a\tau$ is fully contained in $\cup_{i>1}^k(r_i,s_i)$. An inductive application of Lemma~\ref{lem:small_support_f} now establishes that $\phi$ can be written as an element of $\langle T_{[r_2,s_2]},\ldots,T_{[r_k,s_k]}\rangle$, and the claim holds. It follows that for any $b\in\Z[\frac{1}{2}]$ with $0\leq b<1$, the group $G$ contains the rotation $\rho_b$.

Thus, we may multiply $\gamma$ by some rotation $\rho_b\in G$ so as to obtain an element $\gamma'$ such that $\fix{\gamma'} \neq \emptyset$. If $\gamma' = 1$, then we have written $\gamma$ as a product of elements of $G$, so $\gamma \in G$. Otherwise, since $\gamma' \in G$ if and only if $\gamma \in G$, it is sufficient to show that $\gamma' \in G$. Therefore, we now assume that $\gamma\neq 1$ and $\fix{\gamma} \neq \emptyset$. 

Let $p \in \fix{\gamma}$. Since $p\gamma = p$, we can fix $a,b \in \Sone$ such that $a < p < b < a$ in the circular order and such that there exist $c_1,d_1 \in \Z[\frac{1}{2}] \cap \Sone$ satisfying $(a,b) \cup (a\gamma, b\gamma) \subsetneq (c_1,d_1) \subsetneq \Sone$. Since $p \in (a,b) \cap (a\gamma,b\gamma)$, there exist $c_2,d_2 \in \Z[\frac{1}{2}] \cap \Sone$ satisfying $\emptyset \subsetneq (c_2,d_2) \subsetneq (a,b) \cap (a\gamma,b\gamma)$. By Corollary~\ref{cor:repartitioning} (by splitting each $T_{[r_i,s_i]}$ as products of smaller support copies of $F$ through repartitioning, and then merging some of these in some order), we may assume that $k=2$ and that $[r_1,s_1] = [c_1,d_1]$ and $[r_2,s_2] = [d_2,c_2]$ (the complement of $(c_2,d_2)$). Since $[a,b]\cup [a\gamma,b\gamma] \subseteq (r_1,s_1)$, we can fix $\alpha \in T_{[r_1,s_1]}$ such that $\alpha|_{[a,b]} = \gamma|_{[a,b]}$. Therefore, $\beta = \alpha^{-1}\gamma$ acts trivially on $[a\gamma,b\gamma]$, so $\supt{\beta} \subseteq \Sone \setminus [\alpha\gamma,\beta\gamma] \subseteq [r_2,s_2]$. Now $\gamma = \alpha\beta$ with $\alpha \in T_{[r_1,s_1]}$ and $\beta \in T_{[r_2,s_2]}$, so $\gamma \in G = \< T_{[r_1,s_1]}, T_{[r_2,s_2]} \>$ as required.
\end{proof}

\subsection{\boldmath Representing elements of $F$ and $T$} \label{ss:p_notation}

This section explains our notation for elements of $F$ and $T$. For any $\gamma \in T$ (or of $F$), we may fix two sequences of disjoint open dyadic intervals $(I_1, \dots, I_k)$ and $(J_1, \dots, J_k)$ whose closures both cover $\Sone$ (or both cover $[0,1]$, respectively) such that $I_i\gamma = J_i$ and $\gamma|_{I_i}$ is affine for each index $i$. Therefore, these sequences of intervals fully determine $\gamma$, so we write $\gamma = (I_1, \dots, I_k) \mapsto (J_1, \dots, J_k)$. Moreover, by subdividing some of these intervals if necessary, we may assume that each such interval is of the form $(\frac{a}{2^j},\frac{a+1}{2^j})$. This is useful since such dyadic intervals can be naturally labelled by binary words as follows: inductively, the empty word corresponds to $(0,1)$ and for any binary word $v$, the words $v0$ and $v1$ correspond to the left and right halves, respectively, of the interval corresponding to $v$. Therefore, we can specify an element $\gamma \in T$ as $\gamma = (u_1, \dots, u_n) \mapsto (v_1, \dots v_n)$ where $u_i$ and $v_i$ binary words. Moreover, if $\gamma \in F$, then we may assume that, in the lexicographic order, $u_1 < \dots < u_n$ and $v_1 < \dots < v_n$, so it is enough to specify the sets $\{u_1, \dots, u_n\}$ and $\{v_1, \dots, v_n\}$. 

Therefore, by identifying binary words with leaves of a finite binary rooted tree, we can represent an element of $F$ by a \emph{tree pair}, namely a pair of finite binary rooted trees with the same number of leaves. (Elements of $T$ also have a description via tree pairs but we will not need this.) For example, the element $\zeta \in F$ defined as
\[
x\zeta = \left\{ 
\begin{array}{ll}
2x                       & \text{if $x \in [0,\frac{1}{4}]$} \\[3pt]
\frac{1}{2}x+\frac{3}{8} & \text{if $x \in (\frac{1}{4},\frac{3}{4}]$}\\[3pt]
x                        & \text{if $x \in (\frac{3}{4},1]$.}
\end{array}
\right.
\]
is denoted by $(00, 01, 10, 11) \mapsto (0, 100, 101, 11)$ and also by the tree pair in Figure~\ref{fig:zeta}. A tree pair $A \to B$ is \emph{reduced} if there do not exist binary words $u$ and $v$ such that both $u0$ and $u1$ are leaves of $A$ and both $v0$ and $v1$ are leaves of $B$. For example, the tree pair in Figure~\ref{fig:zeta} is reduced.

\subsection{\boldmath A conjugacy result for $T$} \label{ss:p_conjugacy}

Fix an integer $k \geq 2$. We say that $\mu \in T$ \emph{admits $k$ hops (at $p$)} if there exists a point $p \in \supt{\mu}$ such that $(p\mu^i,p\mu^{i+1}) \cap (p\mu^j,p\mu^{j+1}) = \emptyset$ for all distinct $0 \leq i, j < k$. If $\mu$ admits $k$ hops at $p$, then for $0 \leq i < k$ we call $[p\mu^i,p\mu^{i+1})$ a \emph{fundamental domain} of $\mu$. We use the following in Proposition~\ref{prop:infinite}.

\begin{lemma} \label{lem:conjugacy}
Let $k \geq 1$. Let $\mu,\nu \in T$ be elements that admit $k+3$ hops at $p,q \in \Z[\frac{1}{2}] \cap \Sone$ respectively. Let $[r,s] \subsetneq \Sone \setminus [q,q\nu^{k+1}]$. Then there exists an element $\gamma \in T$ such that $\mu^\gamma|_{[q,q\nu^k]} = \nu|_{[q,q\nu^k]}$ and $q\nu^{k+1} < r < s < r\mu^\gamma < s\mu^\gamma < q < q\nu^{k+1}$ in the circular order.
\end{lemma}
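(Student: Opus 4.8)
The plan is to produce $\gamma$ as the inverse of an element $\delta\in T$ that intertwines the dynamics of $\nu$ near $q$ with those of $\mu$ near $p$, so that $\mu^{\gamma}=\delta\mu\delta^{-1}$. Because $\mu$ admits $k+3$ hops at $p$ and $\nu$ admits $k+3$ hops at $q$, the orbit points $p,p\mu,\dots,p\mu^{k+3}$ and $q,q\nu,\dots,q\nu^{k+3}$ each lie in convex position on $\Sone$; consequently $\mu$ carries each fundamental domain $[p\mu^{i},p\mu^{i+1}]$ onto the next one $[p\mu^{i+1},p\mu^{i+2}]$ for $0\leq i\leq k+1$, and likewise for $\nu$, and since $p,q\in\Z[\frac{1}{2}]$ and $\mu,\nu\in T$, all of these restrictions are Thompson-like maps between dyadic intervals.

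The first step is to define $\delta$ on $[q,q\nu^{k+1}]$ so that $\delta\mu\delta^{-1}$ already agrees with $\nu$ on $[q,q\nu^{k}]$. Restricted to a fundamental domain $[q\nu^{i},q\nu^{i+1}]$ with $0\leq i\leq k-1$, the desired equality $\delta\mu\delta^{-1}=\nu$ amounts to $\delta\mu=\nu\delta$ on that domain, which by the previous paragraph reads
\[
\delta|_{[q\nu^{i},q\nu^{i+1}]}\;\mu|_{[p\mu^{i},p\mu^{i+1}]}\;=\;\nu|_{[q\nu^{i},q\nu^{i+1}]}\;\delta|_{[q\nu^{i+1},q\nu^{i+2}]};
\]
this rearranges to a recursion determining $\delta|_{[q\nu^{i+1},q\nu^{i+2}]}$, as a Thompson-like map onto $[p\mu^{i+1},p\mu^{i+2}]$, from $\delta|_{[q\nu^{i},q\nu^{i+1}]}$. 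So I would fix any Thompson-like map $\delta|_{[q,q\nu]}\colon[q,q\nu]\to[p,p\mu]$, which exists by Lemma~\ref{lem:interval_f}, and propagate it through the recursion; this defines $\delta$ on $[q,q\nu^{k+1}]$ as a Thompson-like map onto $[p,p\mu^{k+1}]$, and by construction $\mu^{\gamma}|_{[q,q\nu^{k}]}=\nu|_{[q,q\nu^{k}]}$.

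The second step extends $\delta$ over the complementary arc $(q\nu^{k+1},q)$, which it must carry onto $(p\mu^{k+1},p)$, in a way that also secures the circular-order condition; this is where having $k+3$ hops rather than just $k+1$ is used, since the arc $(p\mu^{k+1},p)$ then contains the consecutive fundamental domains $[p\mu^{k+1},p\mu^{k+2}]$ and $[p\mu^{k+2},p\mu^{k+3}]$, with $\mu$ carrying the former onto the latter and both lying strictly inside $(p\mu^{k+1},p)$. Fix dyadic points $r_0,s_0$ with $q\nu^{k+1}<r_0<r<s<s_0<q$ and dyadic points $t_0<t_3$ with $[t_0,t_3]\subseteq(p\mu^{k+1},p\mu^{k+2})$, and, applying Lemma~\ref{lem:interval_f} to each of the three pieces and gluing consistently at $q$ and $q\nu^{k+1}$ with the map from the first step, choose $\delta$ on $(q\nu^{k+1},q)$ to send $[q\nu^{k+1},r_0]$ onto $[p\mu^{k+1},t_0]$, $[r_0,s_0]$ onto $[t_0,t_3]$, and $[s_0,q]$ onto $[t_3,p]$. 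Then $[r,s]\delta\subseteq[t_0,t_3]$, so $[r,s]\delta\mu\subseteq[p\mu^{k+1},p\mu^{k+2}]\mu=[p\mu^{k+2},p\mu^{k+3}]\subseteq(t_3,p)$, and applying $\delta^{-1}$ gives $[r,s]\mu^{\gamma}\subseteq(t_3,p)\delta^{-1}=(s_0,q)\subseteq(s,q)$; since $\mu^{\gamma}$ is orientation-preserving and $r<s$, tracking the endpoints then yields $q\nu^{k+1}<r<s<r\mu^{\gamma}<s\mu^{\gamma}<q$ in the circular order.

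Finally, the resulting $\delta$ is a piecewise-linear orientation-preserving self-homeomorphism of $\Sone$ built from finitely many Thompson-like pieces whose domains tile $\Sone$, whose images tile $\Sone$, and which agree at the (dyadic) junction points, so $\delta\in T$ and $\gamma=\delta^{-1}\in T$ is as required. I expect the principal difficulty to be the simultaneous bookkeeping --- obtaining one $\delta$ that conjugates $\mu$ to match $\nu$ on $[q,q\nu^{k}]$ while at the same time moving $[r,s]$ into the prescribed position in the complementary arc --- together with recognising that the two hops beyond $k+1$ are precisely what furnish the room in $(p\mu^{k+1},p)$ that the latter requires.
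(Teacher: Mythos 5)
Your proof is correct and follows essentially the same strategy as the paper's: the conjugator (you build $\delta=\gamma^{-1}$, the paper builds $\gamma$ directly) is defined inductively on fundamental domains so that $\mu^\gamma$ agrees with $\nu$ on $[q,q\nu^k]$, and is then extended over the complementary arc using the two spare fundamental domains $[p\mu^{k+1},p\mu^{k+2}]$ and $[p\mu^{k+2},p\mu^{k+3}]$ to force $[r,s]$ and its $\mu^\gamma$-image into the required circular order. The paper pins down the extension by prescribing the images of four specific points, $(m_1,m_2,m_1\mu,m_2\mu)\gamma=(r,s,n_1,n_2)$, which gives $r\mu^\gamma=n_1$ and $s\mu^\gamma=n_2$ exactly, whereas you argue via containment of $[r,s]\delta$ in a chosen subinterval; this is a cosmetic difference.
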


\begin{proof}
We begin by defining $\gamma$ inductively on $[p\mu^i,p\mu^{i+1}]$ for $0 \leq i \leq k$. On $[p,p\mu]$, define $\gamma$ in any way such that $[p,p\mu]\gamma = [q,q\nu]$. For $i > 0$, define $\gamma|_{[p\mu^i,p\mu^{i+1}]}$ as $\mu^{-1}|_{[p\mu^i,p\mu^{i+1}]}\gamma|_{[p\mu^{i-1},p\mu^i]}\nu$. This implies that for all $0 \leq i < k$, we have $[p\mu^i,p\mu^{i+1}]\gamma = [q\nu^i,q\nu^{i+1}]$ and $\mu^\gamma|_{[q\nu^i,q\nu^{i+1}]} = \nu|_{[q\nu^i,q\nu^{i+1}]}$ (see Figure~\ref{fig:conjugacy}).

With $\gamma$ defined on $[p,p\mu^{k+1}]$, now turn to $\Sone \setminus [p,p\mu^{k+1}]$. Let $m_1 = p\mu^{k+1} + \frac{1}{2}(p\mu^{k+2}-p\mu^{k+1})$ and $m_2 = p\mu^{k+1} + \frac{3}{4}(p\mu^{k+2}-p\mu^{k+1})$, and let $n_1 = s + \frac{1}{2}(q-s)$ and $n_2 = s + \frac{3}{4}(q-s)$. Define $\gamma$ on $\Sone \setminus [p,p\mu^{k+1}]$ in any way such that $(m_1,m_2,m_1\mu,m_2\mu)\gamma = (r,s,n_1,n_2)$ (see Figure~\ref{fig:conjugacy}). This implies that $[r\mu^\gamma,s\mu^\gamma] = [m_1\mu\gamma,m_2\mu\gamma] = [n_1,n_2]$, so $q\nu^{k+1} < r \leq s < r\mu^\gamma \leq s\mu^\gamma < q < q\nu^{k+1}$ in the circular order, as desired.
\end{proof}
\vspace{-11pt}

\begin{figure}[h]
\centering
\begin{tikzpicture}
[scale=0.55]
\foreach \i [count=\j from 0] in {0,1,2,3,k-1,k} {
\draw (2*\j,6) node (1\i) {
\ifthenelse{\equal{\i}{0}}{$q$}{
\ifthenelse{\equal{\i}{1}}{$q\nu$}{
\ifthenelse{\equal{\i}{3}}{$\dots$}{$q\nu^{\i}$}}}};
\draw (2*\j+1,4) node (2\i) {
\ifthenelse{\equal{\i}{0}}{$p$}{
\ifthenelse{\equal{\i}{1}}{$p\mu$}{
\ifthenelse{\equal{\i}{3}}{$\dots$}{$p\mu^{\i}$}}}};
\ifthenelse{\equal{\i}{3}}{}{
\draw[->] (1\i) -- (2\i);} 
}
\foreach \i [count=\j from 1] in {1,2,3,4,k,k+1} {
\draw (2*\j+1,2) node (3\i) {
\ifthenelse{\equal{\i}{1}}{$p\mu$}{
\ifthenelse{\equal{\i}{4}}{$\dots$}{$p\mu^{\i}$}}};
\draw (2*\j,0) node (4\i) {
\ifthenelse{\equal{\i}{1}}{$q\nu$}{
\ifthenelse{\equal{\i}{4}}{$\dots$}{$q\nu^{\i}$}}};
\ifthenelse{\equal{\i}{4}}{}{
\draw[->] (3\i) -- (4\i)};
}
\draw[->] (20) -- (31);
\draw[->] (21) -- (32);
\draw[->] (22) -- (33);
\draw[->] (2k-1) -- (3k);
\draw[->] (2k) -- (3k+1);
\draw (15,2) node (37) {$m_1$};
\draw (17,2) node (38) {$m_2$};
\draw (19,2) node (39) {$p\mu^{k+2}$};
\draw (21,2) node (310) {$m_1\mu$};
\draw (23,2) node (311) {$m_2\mu$};
\draw (25,2) node (312) {$p\mu^{k+3}$};
\draw (27,2) node (313) {$p$};
\draw (14,0) node (47) {$r$};
\draw (16,0) node (48) {$s$};
\draw (20,0) node (410) {$n_1$};
\draw (22,0) node (411) {$n_2$};
\draw (26,0) node (413) {$q$};
\foreach \i in {7,8,10,11,13} {
\draw[->] (3\i) -- (4\i);
}
\draw (-0.5,5) node {$\gamma^{-1}$};
\draw (1.25,2.75) node {$\mu$};
\draw (1.75,1.25) node {$\gamma$};
\end{tikzpicture}
\caption{An accompaniment to the proof of Lemma~\ref{lem:conjugacy}}
\label{fig:conjugacy}
\end{figure}
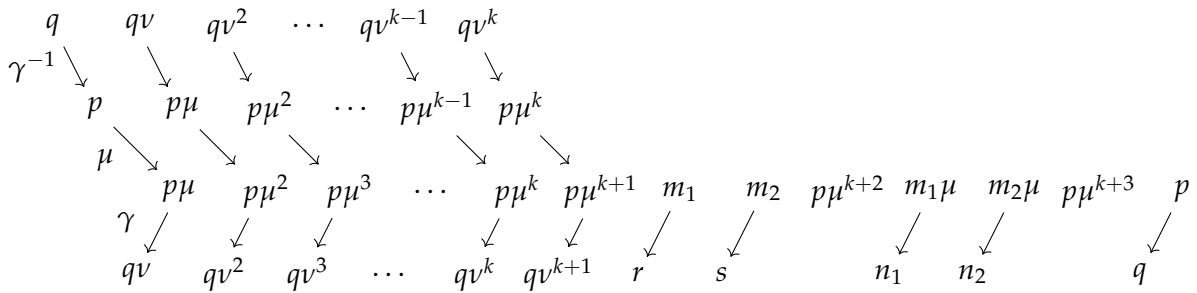

\subsection{\boldmath A generation criterion for $F$} \label{ss:p_generation}

In this section, we will outline a criterion, due to Golan \cite{GolanGeneration}, that guarantees that a finite subset of $F$ generates $F$. This criterion uses a construction known as the Stallings $2$-core, defined by Guba and Sapir \cite{gubaSapirDiagramGroups}, that arises when $F$ is viewed as a diagram group. However, since we are only concerned with $F$ and not diagram groups in general, we will give an outline of how to verify this criterion that does not explicitly involve diagram groups.  For a formal description of the algorithm see \cite{GolanGeneration} or \cite{Nikkel2019}.

Let $\gamma_1, \dots, \gamma_k \in F$. For each $1 \leq i \leq k$, let $A_i \to B_i$ be a reduced tree pair for $\gamma_i$. Let $\Gamma$ be the forest defined as the disjoint union of $A_1, B_1, \dots, A_k, B_k$, and let $V$ be set of vertices of $\Gamma$. Let us now define a directed graph $\Delta = \Delta(\gamma_1, \dots, \gamma_k)$ that captures the information of the Stallings $2$-core of $\{\gamma_1, \dots, \gamma_k\}$. We do this in several stages.

First let $\sim_0$ be the equivalence relation on $V$ defined such that for $u \neq v$ we have $u \sim_0 v$ if and only if $u$ and $v$ are both roots of $\Gamma$ or for some $i,j \in \N$ the vertices $u$ and $v$ are the $j$th leaves, with respect to the lexicographic order, of $A_i$ and $B_i$. 

Now define $\sim$ as the equivalence relation on $V$ obtained from $\sim_0$ by repeatedly carrying out the following operations (i) and (ii) until no more reductions can be made by repeating the operation. (Recall here that a triple $(v_0,r,v_1)$ of vertices of a binary rooted forest is a \emph{caret} if $v_0$ and $v_1$ are the left and right downward neighbours of $r$, respectively.)
\begin{enumerate}
\item For two carets $(u_0,r,u_1)$, $(v_0,s,v_1)$ in $\Gamma$, if $r \sim s$, then redefine $\sim$ as the finest coarsening of $\sim$ such that $u_0 \sim v_0$ and $u_1 \sim v_1$.
\item For two carets $(u_0,r,u_1), (v_0,s,v_1)$ in $\Gamma$, if $u_0 \sim v_0$ and $u_1 \sim v_1$, then redefine $\sim$ as the finest coarsening of $\sim$ such that $r \sim s$.
\end{enumerate}
By \cite[Remark~3.21]{GolanSapir}, $\sim$ does not depend on the order in which we carry out these operations. 

Finally, define $\Delta$ as the directed edge-labeled graph labeled by $\{0,1\}$ whose vertices are the equivalence classes of $\sim$ and where there is a directed edge with label $0$ (respectively, $1$) from $X$ to $Y$ if some $u \in X$ has a downward left (respectively, right) neighbour $v \in Y$. 

We are now in the position to state the following lemma, combining \cite[Corollary~1.4, Lemma~7.1 \& Remark~7.2]{GolanGeneration}, which gives a generation criterion for $F$. Here (and when we use the lemma in Proposition~\ref{prop:finite}), we denote the derivative of a function $\alpha$ by $\alpha'$. 

\begin{lemma} \label{lem:generation}
Let $\gamma_1, \dots, \gamma_k \in F$. Then $\< \gamma_1, \dots, \gamma_k \> = F$ if and only if all of the following hold:
\begin{enumerate}
\item there exists $\mu, \nu, \xi \in \< \gamma_1, \dots, \gamma_k\>$ such that 
\begin{enumerate}[{\rm (a)}]
\item $\mu'(0^+) = 2$ and $\mu'(1^-) = 1$
\item $\nu'(0^+) = 1$ and $\nu'(1^-) = 2$
\item there exists $x \in \Z[\frac{1}{2}] \cap (0,1)$ such that $\xi(x)=x$, $\xi'(x^+) = 2$ and $\xi'(x^-) = 1$. 
\end{enumerate}
\item $\Delta(\gamma_1,\dots,\gamma_k)$ is the directed graph in Figure~\ref{fig:generation}.
\end{enumerate}
\end{lemma}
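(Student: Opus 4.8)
The plan is to recognise the directed graph $\Delta(\gamma_1,\dots,\gamma_k)$ as an incarnation of the Stallings $2$-core of the subgroup $H = \langle \gamma_1,\dots,\gamma_k\rangle \leq F$ and then to invoke the three cited results of Golan more or less verbatim. Concretely, I would first set up the dictionary between $F$ and diagram groups over the semigroup presentation $\langle x \mid x = x^2\rangle$, under which an element of $F$ given by a reduced tree pair $A \to B$ corresponds to a reduced diagram, and a finite set $\{\gamma_1,\dots,\gamma_k\}$ corresponds to a finite set of diagrams whose Stallings $2$-core is obtained by taking the disjoint union of the associated trees, identifying the distinguished vertices (all roots together, and the $j$th leaves of $A_i$ and $B_i$ for each $i,j$), and then folding. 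I would then check that the two caret moves (i) and (ii) are exactly the identifications forced in the diagram-group $2$-core: move (i) pushes an identification down through a caret and move (ii) is its converse, so the equivalence relation $\sim$ obtained by iterating (i) and (ii) to stabilisation is precisely the vertex identification defining the core. Independence of the order of the moves, and of the choice of reduced tree pairs, is exactly \cite[Remark~3.21]{GolanSapir}, which is why it may be quoted; reading the edge-labelled directed graph off $\sim$ then yields the $2$-core as a labelled graph.

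With this identification in place, condition (ii) says precisely that the $2$-core of $H$ equals the $2$-core of $F$ itself, the graph of Figure~\ref{fig:generation}; equivalently, by Golan's description of the closure operation, that the closure $\mathrm{Cl}(H)$ of $H$ is all of $F$, which is one half of \cite[Corollary~1.4]{GolanGeneration}. Having the full $2$-core is necessary but not sufficient for $H = F$: there is a family of proper subgroups of $F$ whose $2$-core is already the full graph, and these are distinguished from $F$ by their germs near the two endpoints and at interior dyadic points. The remaining content of \cite[Corollary~1.4]{GolanGeneration} together with \cite[Lemma~7.1 \& Remark~7.2]{GolanGeneration} is that, given a full $2$-core, one has $H = F$ if and only if $H$ realises the germ of slope $2$ at $0$ (via some $\mu$), the germ of slope $2$ at $1$ (via some $\nu$), and at some interior dyadic fixed point a germ that is trivial on the left and of slope $2$ on the right (via some $\xi$) — that is, exactly conditions (i)(a), (i)(b) and (i)(c). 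Combining these statements gives both directions of the lemma.

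I expect the main obstacle to be the first step: carefully verifying that the elementary combinatorial recipe for $\Delta$ stated above computes the same object as Golan's diagram-group Stallings $2$-core, including the bookkeeping between reduced and unreduced tree pairs and the assertion that the folding process terminates with a well-defined answer. Once this translation is pinned down — and here we are entitled to lean on \cite{GolanGeneration,GolanSapir} — the rest of the argument is a direct appeal to the cited results with essentially no further work.
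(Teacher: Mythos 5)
Your proposal is correct and follows essentially the same route as the paper: identify the combinatorial construction of $\Delta(\gamma_1,\dots,\gamma_k)$ with Golan's Stallings $2$-core in the diagram-group picture, check that the caret moves implement the same folding, and invoke \cite[Corollary~1.4, Lemma~7.1, Remark~7.2]{GolanGeneration} to conclude. The paper states this translation more tersely, while your version usefully unpacks what each cited result contributes (condition~(ii) giving the full $2$-core, i.e.\ closure equal to $F$, and condition~(i) supplying the germ data at $0$, at $1$, and at an interior dyadic fixed point).
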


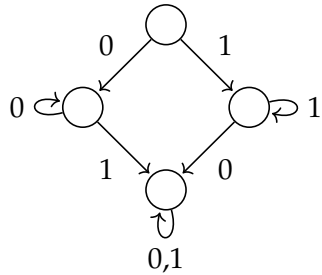
\begin{figure}[h]
\[
\begin{tikzpicture} 
[->, shorten >=1pt, auto, semithick, node distance=1cm, every state/.style={inner sep=1pt,minimum size=15pt}]
\node[state] (0)                      {};
\node[state] (1) [below left  = of 0] {};
\node[state] (2) [below right = of 0] {};
\node[state] (3) [below right = of 1] {};
\path (0) edge              node [swap] {0} (1)
          edge              node        {1} (2)
      (1) edge [loop left]  node        {0} (1)
          edge              node [swap] {1} (3)
      (2) edge              node        {0} (3)
          edge [loop right] node        {1} (2)
      (3) edge [loop below] node        {0,1} (3);
\end{tikzpicture}
\]
\caption{The directed graph for Lemma~\ref{lem:generation}}
\label{fig:generation}
\end{figure}

Let us justify why Lemma~\ref{lem:generation} follows from  \cite[Corollary~1.4, Lemma~7.1 \& Remark~7.2]{GolanGeneration}. This is a matter of translating notation. In terms of diagram groups, to obtain the diagram associated to the tree pair $A_i \to B_i$, we first take the graph obtained by identifying the leaves of $A_i$ with the corresponding leaves of $B_i$ and then we replace each vertex of $A_i$ and $B_i$ with an edge and each caret of $A_i$ and $B_i$ with a cell with top path that is a single edge that corresponds to the root of the caret and with bottom path that is a path of length two that corresponds to the two leaves of the caret. Under this correspondence, the algorithm that we described to repeatedly modify the equivalence relation $\sim$ corresponds to the algorithm given in \cite{GolanSapir} to repeatedly identify edges of the diagrams and the directed graph $\Delta$ that we obtain encodes the same information as the Stallings $2$-core defined in \cite[Section~3.2]{GolanSapir}.

We conclude by fixing a standard generating pair for $F$. Let $x_0 = (00, 01, 1) \mapsto (0, 10, 11)$ and $x_1 = (0, 100, 101, 11) \mapsto (0, 10, 110, 111)$, which we represent by tree pairs in Figure~\ref{fig:x} (we are using our notation from Section~\ref{ss:p_notation}). Then, by \cite[Theorem~3.4]{cfp} for example, $F = \< x_0, x_1 \>$. (It is an instructive exercise to verify this also using Lemma~\ref{lem:generation} above.) Moreover, although we do not need this fact, if we inductively define $x_{i+1} = x_i^{x_0}$ for all $i \geq 1$, then the elements $x_0, x_1, x_2, \dots$ witness the following well known presentation $F = \< x_0, x_1, x_2, \dots \mid \text{$x_j^{x_i} = x_{j+1}$ for $i < j$} \>$.

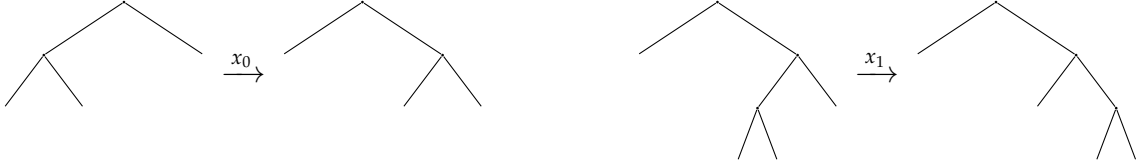
\begin{figure}[h]
\begin{gather*}
  \begin{tikzpicture}[
      inner sep=0pt,
      baseline=-30pt,
      level distance=20pt,
      level 1/.style={sibling distance=60pt},
      level 2/.style={sibling distance=30pt},
      level 3/.style={sibling distance=15pt}
    ]
    \node (root) [circle,fill] {}
    child {node (0) [circle,fill] {}
      child {node (00) {}}
      child {node (01) {}}}
    child {node (1) {}};
  \end{tikzpicture}
  \ \xrightarrow{\,x_0\,} \
  \begin{tikzpicture}[
      inner sep=0pt,
      baseline=-30pt,
      level distance=20pt,
      level 1/.style={sibling distance=60pt},
      level 2/.style={sibling distance=30pt},
      level 3/.style={sibling distance=15pt}
    ]
    \node (root) [circle,fill] {}
    child {node (0) {}}
    child {node (1) [circle,fill] {}
      child {node (10) {}}
      child {node (11) {}}};
  \end{tikzpicture}
  \hspace{2cm}
  \begin{tikzpicture}[
      inner sep=0pt,
      baseline=-30pt,
      level distance=20pt,
      level 1/.style={sibling distance=60pt},
      level 2/.style={sibling distance=30pt},
      level 3/.style={sibling distance=15pt}
    ]
    \node (root) [circle,fill] {}
    child {node (0) {}}
    child {node (1) [circle,fill] {}
      child {node (10) [circle,fill] {}
        child {node (100) {}}
        child {node (101) {}}}
      child {node (11) {}}};       
  \end{tikzpicture} 
  \ \xrightarrow{\,x_1\,} \
  \begin{tikzpicture}[
      inner sep=0pt,
      baseline=-30pt,
      level distance=20pt,
      level 1/.style={sibling distance=60pt},
      level 2/.style={sibling distance=30pt},
      level 3/.style={sibling distance=15pt}
    ]
    \node (root) [circle,fill] {}
    child {node (0) {}}
    child {node (1) [circle,fill] {}
      child {node (10) {}}
      child {node (11) [circle,fill] {}
        child {node (110) {}}
        child {node (111) {}}}};
  \end{tikzpicture}
\end{gather*}
\caption{The elements $x_0, x_1 \in F$} 
\label{fig:x}
\end{figure}

\section{Proof of Theorem \ref{t:theorem}} \label{s:proofs}

We begin with the following result that handles elements of infinite order.

\begin{proposition} \label{prop:infinite}
Let $\alpha, \zeta \in T$ have infinite order. Then there exists $\gamma \in T$ such that $T = \< \alpha, \zeta^\gamma \>$.
\end{proposition}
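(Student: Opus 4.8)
The plan is to find $\gamma$ so that $\langle \alpha, \zeta^\gamma\rangle$ contains an $F$-in-the-box $T_{[r,s]}$ together with enough of its conjugates (by elements of $\langle\alpha,\zeta^\gamma\rangle$) that the union of their supports is $\Sone$; then Corollary~\ref{cor:repartitioning} and Lemma~\ref{lem:small_support}, together with compactness of $\Sone$, force $\langle\alpha,\zeta^\gamma\rangle = T$. The engine for producing the right conjugate is Lemma~\ref{lem:conjugacy}, so the first task is to arrange that $\alpha$ and $\zeta$ admit many hops.

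An arbitrary infinite-order element of $T$ need not admit four hops --- an element of rotation number $\tfrac12$ admits only two --- so I would first reduce to a better situation. Using the standard fact that every element of $T$ has rational rotation number, choose $a,b\geq 1$ with $\alpha^a$ and $\zeta^b$ of rotation number $0$; being powers of infinite-order elements they still have infinite order, and having rotation number $0$ they have fixed points. Now an infinite-order $\mu\in T$ with a fixed point admits $k$ hops for every $k$: on any connected component $U$ of $\supt{\mu}$ the map $\mu$ has no fixed point, so, after possibly replacing $\mu$ by $\mu^{-1}$ (which does not change $\langle\mu,\cdot\rangle$), there is a dyadic point $p\in U$ whose forward orbit $p,p\mu,p\mu^2,\dots$ is monotone and confined to $U$, whence the arcs $(p\mu^i,p\mu^{i+1})$ are pairwise disjoint. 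Since $\langle\alpha^a,(\zeta^b)^\gamma\rangle\leq\langle\alpha,\zeta^\gamma\rangle$, it is enough to find $\gamma$ with $\langle\alpha^a,(\zeta^b)^\gamma\rangle = T$; so I assume henceforth that $\alpha$ and $\zeta$ have fixed points, and hence admit $k+3$ hops for a suitable $k$ chosen below.

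Next, apply Lemma~\ref{lem:conjugacy} with $\mu=\zeta$ and $\nu=\alpha$, taking $[r,s]$ to be a moderately large dyadic interval in the arc $\Sone\setminus[q,q\alpha^{k+1}]$, to obtain $\gamma$ such that, writing $\eta=\zeta^\gamma$, the elements $\alpha$ and $\eta$ agree on the long interval $[q,q\alpha^k]$ while $[r,s]$ and its disjoint image $[r\eta,s\eta]$ both lie in that complementary arc. I would exploit the latitude Lemma~\ref{lem:conjugacy} leaves in defining $\gamma$ off the hop region to arrange in addition that $\eta$ acts on the complementary arc as a shift moving $[r,s]$ along a chain of pairwise disjoint dyadic intervals, and --- since $\supt{\zeta^\gamma}=(\supt{\zeta})\gamma$ can be taken to be a large arc --- that $\supt{\alpha}\cup\supt{\eta}=\Sone$. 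The shared behaviour of $\alpha$ and $\eta$ on $[q,q\alpha^k]$ means both act there like a common shift; combining this with the chain structure around $[r,s]$ --- conjugating by powers of $\alpha$ and $\eta$ to transport supports into place, and taking commutators of elements whose supports have been made disjoint --- one builds $g_0,g_1\in\langle\alpha,\eta\rangle$ that restrict on $[r,s]$ to a Thompson-like conjugate of the standard generators $x_0,x_1$ of $F$ and are the identity off $[r,s]$. By Corollary~\ref{cor:small_support_f}, transported into $[r,s]$ via Lemma~\ref{lem:interval_t}, this gives $\langle\alpha,\eta\rangle\supseteq T_{[r,s]}$.

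Finally, conjugating $T_{[r,s]}$ by suitable powers of $\alpha$ and of $\eta$ --- which push $[r,s]$ toward the endpoints of their support components --- and by elements of $T_{[r,s]}$ itself, produces $F$-in-the-boxes $T_{[r_i,s_i]}\leq\langle\alpha,\eta\rangle$ whose supports, together with $(r,s)$, cover $\supt{\alpha}\cup\supt{\eta}=\Sone$; a finite subcover and Lemma~\ref{lem:small_support} (after repartitioning, Corollary~\ref{cor:repartitioning}) give $\langle\alpha,\zeta^\gamma\rangle\supseteq\langle\alpha,\eta\rangle=T$. I expect the main obstacle to be the construction in the previous paragraph: passing from an \emph{approximate} $F$-in-the-box (elements behaving correctly on $[r,s]$ but carrying unwanted parts elsewhere) to an \emph{exact} one. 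The difficulty is to keep the ``junk'' of the manufactured elements disjoint from $[r,s]$ and from enough of its translates, so that it can be cancelled by commuting with suitable conjugates without disturbing the behaviour on $[r,s]$ --- and this is precisely what the careful choice of $\gamma$ off the hop region, and the matching of $\alpha$ with $\eta$ on a long interval, are engineered to allow. A secondary point needing care is checking that the conjugates of $T_{[r,s]}$ genuinely cover $\Sone$, which is why one wants $[r,s]$ reasonably large and $\supt{\alpha}\cup\supt{\eta}=\Sone$.
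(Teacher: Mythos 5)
Your overall strategy --- reduce to fixed-point elements, apply Lemma~\ref{lem:conjugacy}, extract an $F$-in-the-box from $\langle\alpha,\zeta^\gamma\rangle$, and cover $\Sone$ --- matches the paper's, and your opening reduction (replacing $\alpha$ by $\alpha^a$ and $\zeta$ by $\zeta^b$ with rotation number zero, noting $\langle\alpha^a,(\zeta^\gamma)^b\rangle\leq\langle\alpha,\zeta^\gamma\rangle$) is sound. But there is a genuine gap at the central step: you have no mechanism to produce the local $F$-generators.

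You apply Lemma~\ref{lem:conjugacy} with $\nu=\alpha$, so that $\eta=\zeta^\gamma$ agrees with $\alpha$ on $[q,q\alpha^k]$. Then $\xi:=\alpha^{-1}\eta$ is trivial exactly where $\alpha$ and $\eta$ agree; its support lies in the complementary region, where Lemma~\ref{lem:conjugacy} constrains only the images of four points, so the local form of $\xi$ there is arbitrary. When you then claim that ``one builds $g_0,g_1\in\langle\alpha,\eta\rangle$ restricting on $[r,s]$ to $\ldots x_0,x_1$'' by transporting supports and taking commutators, there is nothing to build from: disjointly supported conjugates of $\xi$ commute, and commutators of overlapping conjugates have local form just as uncontrolled as $\xi$. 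The paper's essential device, which you are missing, is to apply Lemma~\ref{lem:conjugacy} not with $\nu=\alpha$ but with $\nu=\beta$, where $\beta=\alpha\,x_0^{\tau\alpha^{-2}}x_1^{\tau}$ is $\alpha$ with scaled copies of the standard $F$-generators \emph{deliberately spliced into two of its fundamental domains} via a Thompson-like map $\tau:(0,1)\to(a_8,a_{10})$. After verifying that $\beta$ still admits enough hops and that $[a_6,a_{10}]$ sits inside the matching interval, the element $\alpha^{-1}\zeta^\gamma$ restricted to $[a_8,a_{10}]$ literally equals $x_1^\tau$, and its $\alpha^2$-conjugate restricted there equals $x_0^\tau$. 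That seeding is what makes the later cleanup (conjugating so the two seeds' supports overlap only inside $(a_8,a_{10})$ and then taking the derived subgroup to land in $(T_{[a_8,a_{10}]})'$) possible; you correctly flag the cleanup as delicate, but it is downstream --- the upstream problem is that without something like $\beta$, the group $\langle\alpha,\eta\rangle$ never acquires any copy of $x_0$ or $x_1$ in the first place. A smaller point: your assertion that $\supt{\zeta^\gamma}$ ``can be taken to be a large arc'' so that $\supt{\alpha}\cup\supt{\eta}=\Sone$ is not justified --- the paper instead covers $\Sone$ explicitly by finitely many translates of a small interval $(p,q)\subseteq(a_8,a_{10})$ under specific words in $\alpha$ and $\zeta^\gamma$, and your sketch would need comparable bookkeeping.
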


\begin{proof}
We begin by identifying some properties of the element $\alpha$. By \cite[Corollary~J]{GhysSergiescu}, every element of $T$ has a finite orbit on $\Sone$, so $\alpha$ has a finite orbit, and by replacing $\alpha$ by a power if necessary, we assume that $\fix{\alpha} \neq \emptyset$.  Let $a\in\supt{\alpha}\cap\Z[\frac{1}{2}]$. There is an interval $(c,d)\subset \Sone$ with $c<a<d\leq c$ which is the (open) component of support of $\alpha$ containing the point $a$.  By replacing $\alpha$ by its inverse if necessary we may assume that $a<a\alpha<d$ in the circular order and in general that $\alpha$ is increasing over $(c,d)$. Writing $a_i = a\alpha^i$ for each $i \in \Z$, we have $[a_i,a_{i+1}) \cap [a_j,a_{j+1}) = \emptyset$ for distinct $i,j \in \Z$. In particular, $\alpha$ admits ${18}$ hops at $a$, with fundamental domains $[a_0,a_1), \dots, {[a_{17},a_{18})}$. 

We have shown these fundamental domains on Figure~\ref{fig:infinite}. Many other labels are also given in this figure, and we will introduce these labels later in the proof (working outwards from the centre of the circle), so the reader is encouraged to consult this figure throughout the proof.

We will first identify an element $\gamma \in T$ such that $(T_{[a_8, a_{10}]})' \leq \< \alpha, \zeta^\gamma \>$. To this end, we need to fix a generating set for $T_{[a_8, a_{10}]}$. Fix $x_0, x_1 \in F$ from Figure~\ref{fig:x}, so $F = \< x_0, x_1 \>$. Fix a Thompson-like map $\tau\: (0,1) \to (a_8, a_{10})$ such that $\frac{1}{2}\tau = a_9$. By Lemma~\ref{lem:interval_t}, $T_{[a_8, a_{10}]} = F^\tau = \< x_0^\tau, x_1^\tau \>$. 

Motivated by this, define $\beta = \alpha x_0^{\tau\alpha^{-2}} x_1^{\tau}$. Note that $x_0^{\tau\alpha^{-2}}$ stabilises $[a_6, a_8]$ and acts trivially on the complement. Similarly $x_1^{\tau}$ stabilises $[a_8, a_{10}]$. Write $b_i = a\beta^i$ for each $i \in \Z$.  We observe that $\beta$ also admits ${18}$ hops at $a$, with fundamental domains $[b_0,b_1), \dots, {[b_{17},b_{18})}$. Note that $\beta|_{\Sone \setminus [a_5,a_9]} = \alpha|_{\Sone \setminus [a_5,a_9]}$, so $b_i = a_i$ for all $0 \leq i \leq 6$. 

We claim that $a_i < b_i < a_{i+1}$ for all $7 \leq i \leq 17$. To see this, we begin with $i=7$, where 
\[
b_7 = b_6 \beta = b_6 \alpha x_0^{\tau\alpha^{-2}} x_1^{\tau} = a_6 \alpha x_0^{\tau\alpha^{-2}} x_1^{\tau} = a_7  x_0^{\tau\alpha^{-2}} x_1^{\tau} \in (a_7,a_8)
\]
since $x_0^{\tau\alpha^{-2}}$ is increasing on $(a_6,a_8)$. Next let $8 \leq i \leq 9$. Here $b_{i-1} \in (a_{i-1},a_i)$, so $b_{i-1} \alpha \in (a_i,a_{i+1})$. Now $x_1$ stabilises $(0,\frac{1}{2})$ and $(\frac{1}{2},1)$, and $(0,\frac{1}{2})\tau = (a_8,a_9)$ and $(\frac{1}{2},1)\tau = (a_9,a_{10})$ by construction, so $x_1^\tau$ stabilises $(a_8,a_9)$ and $(a_9,a_{10})$. Therefore $b_i = b_{i-1} \alpha x_0^{\tau\alpha^{-2}} x_1^{\tau} \in (a_i,a_{i+1})$. Finally, consider $10 \leq i \leq 17$. Here $b_{i-1}\alpha \in (a_i,a_{i+1})$ as $b_{i-1} \in (a_{i-1},a_i)$, so $b_i = b_{i-1}\alpha$ as $(a_i,a_{i+1}) \subseteq \Sone \setminus [a_5,a_9]$.

We will soon choose $\gamma \in T$ such that $\zeta^\gamma$ agrees with $\beta$ on $[b_0,b_{15}]$, but we want to have some control over $\zeta^\gamma$ in the region $\Sone \setminus [b_0,b_{15}]$ too, which necessitates the following discussion. Record that, since $c \in \fix{\alpha}$, we have
\begin{equation} \label{eq:alpha}
\text{$y \in (b_{15},c) \implies y\alpha \in (b_{16},c)$\quad and \quad $y \in (c,a_1) \implies y\alpha^{-1} \in (c,a_0)$.}
\end{equation}

Now fix $r,s \in \Z[\frac{1}{2}]$ such that $b_{16} < r < b_{17}$ and $a_{-1} < s < a_0$. Since $\zeta$ has infinite order, by Lemma~\ref{lem:conjugacy}, there exists $\gamma \in T$ such that $\zeta^\gamma|_{[b_0,b_{15}]} = \beta|_{[b_0,b_{15}]}$ and, in the circular order,
\[
b_{16} < r < b_{17} < c < a_{-1} < s < r\zeta^\gamma < s\zeta^\gamma < a_0 < b_{16}.
\]  
In particular, note that for all nonnegative integers $i$ we have 
\begin{equation} \label{eq:zeta}
\text{$b_{17}\alpha^i\zeta^\gamma \in (c,a_0)$\quad and  \quad $a_{-1}\alpha^{-i}(\zeta^\gamma)^{-1} \in (b_{15},r)\subsetneq (b_{15},c)$.}
\end{equation}

Having defined $\gamma \in T$ we will now prove that $(T_{[a_8,a_{10}]})' \leq \< \alpha, \zeta^\gamma \>$. To this end, let $\eta = \alpha^{-1}\zeta^\gamma$. Since $\zeta^\gamma|_{[b_0,b_{15}]} = \beta|_{[b_0,b_{15}]}$ and $\beta|_{\Sone \setminus [a_5,a_9]} = \alpha|_{\Sone \setminus [a_5,a_9]}$, we deduce that $\eta|_{[a_6,a_{10}]} = x_0^{\tau\alpha^{-2}} x_1^{\tau}$ and $\eta$ acts trivially on $[a_1,a_6] \cup [a_{10},b_{16}]$, so $\supt{\eta} \subseteq (a_6,a_{10}) \cup (b_{16},a_1)$. In particular, $\eta|_{[a_8,a_{10}]} = x_1^\tau$ and $\eta^{\alpha^2}|_{[a_8,a_{10}]} = x_0^\tau$, but we have no control over  $\eta$ in the region $(b_{16},a_1)$ nor over $\eta^{\alpha^2}$ in the region $(b_{18},a_3)$, so we will replace these elements with suitable conjugates to solve this problem.

Let $\mu_0 = (\eta^{\alpha^2})^{\alpha^2\zeta^\gamma\alpha^{-3}}$ and $\mu_1 = \eta^{\alpha^{-4}(\zeta^\gamma)^{-1}\alpha^5}$, noting $\mu_0|_{[a_8,a_{10}]} = x_0^\tau$ and $\mu_1|_{[a_8,a_{10}]} = x_1^\tau$. We claim
\[
\text{$\supt{\mu_0} \subseteq (a_8,a_{12}) \cup (c,a_3)$\quad and \quad $\supt{\mu_1} \subseteq (a_6,a_{10}) \cup (b_{16},c)$.} 
\]

First consider $\supt{\mu_0}$. Using the fact that $\zeta^{\gamma}|_{[a_0,a_5] \cup [a_9,b_{15}]} = \beta|_{[a_0,a_5] \cup [a_9,b_{15}]} = \alpha|_{[a_0,a_5] \cup [a_9,b_{15}]}$, we see that $(a_6,a_{10})\alpha^2(\alpha^2\zeta^\gamma\alpha^{-3}) = (a_8,a_{12})$ and $a_1\alpha^2(\alpha^2\zeta^\gamma\alpha^{-3}) = a_3$. Now by applying \eqref{eq:zeta} then \eqref{eq:alpha}, we obtain $(b_{16}\alpha^2)\alpha^2\zeta^\gamma\alpha^{-3} \in (c,a_0)$, which implies that $(b_{16},a_1)\alpha^2(\alpha^2\zeta^\gamma\alpha^{-3}) \subseteq (c,a_3)$. Similarly for $\supt{\mu_1}$, we check that $(a_6,a_{10})\alpha^{-4}(\zeta^\gamma)^{-1}\alpha^5 = (a_6,a_{10})$ and $b_{16}\alpha^{-4}(\zeta^\gamma)^{-1}\alpha^5 = b_{16}$, and by applying \eqref{eq:zeta} then \eqref{eq:alpha} we see that $a_1\alpha^{-4}(\zeta^\gamma)^{-1}\alpha^5 \in (b_{16},c)$, which implies that $(b_{16},a_1)\alpha^{-4}(\zeta^\gamma)^{-1}\alpha^5 \subseteq (b_{16},c)$. This proves the claim.

In particular, $\supt{\mu_0} \cap \supt{\mu_1} \subseteq (a_8,a_{10})$, so 
\[
\< \mu_0, \mu_1 \>' = \< \mu_0|_{[a_8,a_{10}]}, \mu_1|_{[a_8,a_{10}]} \>' = \< x_0^{\tau\alpha^2}, x_1^{\tau\alpha^2} \>' = (T_{[a_8,a_{10}]})'.
\] 

Now that we know $(T_{[a_8,a_{10}]})' \leq \< \alpha, \zeta^\gamma \>$, we next seek to prove that $T = \< \alpha, \zeta^\gamma\>$. First fix $p,q \in \Z[\frac{1}{2}]$ such that $a_8 < p < a_9 < p\alpha < q < a_{10}$ and note that Lemma~\ref{lem:simple_f} implies that $T_{[p,q]} \leq (T_{[a_8,a_{10}]})' \leq \< \alpha, \zeta^\gamma \>$. Moreover, by the choice of $p$ and $q$, we see that $(p,q)\alpha^i \cap (p,q)\alpha^{i+1} \neq \emptyset$. 

We seek to apply Lemma~\ref{lem:small_support}. We claim that $\Sone = \bigcup_{\delta \in \Delta} (p,q)\delta$ where
\[
\Delta = \{ \alpha^i \mid -10 \leq i \leq 9 \} \cup \{ \alpha^{-10}(\zeta^{\gamma})^{-1}, \alpha^{-9}(\zeta^{\gamma})^{-1} \}.
\]
To see this, make the following two observations:
\begin{gather*}
\bigcup_{-10 \leq i \leq 9} (p,q)\alpha^i = (p\alpha^{-10},q\alpha^9) \supseteq [a_{-1},a_{18}] \supsetneq \Sone \setminus (r,s) \\
(p,q)\alpha^{-10}(\zeta^{\gamma})^{-1} \cup (p,q)\alpha^{-9}(\zeta^{\gamma})^{-1} = (p\alpha^{-10},q\alpha^{-9})(\zeta^{\gamma})^{-1} \supseteq [a_{-1},a_0](\zeta^{\gamma})^{-1} \supsetneq (r,s).
\end{gather*}
Therefore, by Lemma~\ref{lem:small_support}, $T = \< \{ T_{[p,q]}^\delta : \delta \in \Delta \} \> \leq \< T_{[p,q]}, \Delta \> \leq \< \alpha, \zeta^\gamma \>$. 
\end{proof}

\begin{figure}
\[
\begin{tikzpicture}[scale=0.8]
\draw[black] (0,0) circle (5);

\draw[black] (90:5) -- (90:5.5);
\draw (90:5.8) node[font=\large] {$c$};

\foreach \i [count=\j from 0] in {180,192,...,396} {
    \draw[black!30] (\i:5) -- (\i:4.5);
    \draw (\i:4.2) node[font=\large,black!30] {$a_{\j}$};
}
\draw[black] (168:5) -- (168:4.5);
\draw (168:4.2) node[font=\large] {\,$a_{-1}$};
\draw[black] (180:5) -- (180:4.5);
\draw (180:4.2) node[font=\large] {$a_0$};
\draw[black] (252:5) -- (252:4.5);
\draw (252:4.2) node[font=\large] {$a_6$};
\draw[black] (276:5) -- (276:4.5);
\draw (276:4.2) node[font=\large] {$a_8$};
\draw[black] (300:5) -- (300:4.5);
\draw (300:4.2) node[font=\large] {$a_{10}$};
\foreach \i [count=\j from 0] in {180,192,...,252} {
    \draw[black!30] (\i:5) -- (\i:5.5);
    \draw (\i:5.8) node[font=\large,black!30] {$b_{\j}$};
}

\foreach \i [count=\j from 7] in {270,282,...,366} {
    \draw[black!30] (\i:5) -- (\i:5.5);
    \draw (\i:5.8) node[font=\large,black!30] {$b_{\j}$};
}

\draw[black] (378:5) -- (378:5.5);
\draw (378:5.8) node[font=\large] {$b_{16}$};
\draw[black] (390:5) -- (390:5.5);
\draw (390:5.8) node[font=\large] {$b_{17}$};

\draw[black] (387:5) -- (387:6.25);
\draw (387:6.55) node[font=\large] {$r$};
\draw[black] (171:5) -- (171:6.25);
\draw (171:6.55) node[font=\large] {\raisebox{1.5mm}{$s$}};
\draw[black] (174:5) -- (174:6.25);
\draw (174:6.55) node[font=\large] {\raisebox{1mm}{$r\zeta^\gamma$}};
\draw[black] (177:5) -- (177:6.25);
\draw (177:6.55) node[font=\large] {\raisebox{-6mm}{$s\zeta^\gamma$}};

\foreach \i in {159,171,...,387} {
  \draw[draw=white,fill=black!30,opacity=0.5,rounded corners=2mm] (\i+18:7) arc (\i+18:\i:7) -- (\i:7.5) arc (\i:\i+18:7.5) -- cycle;
}

\draw[draw=black,rounded corners=2mm] (297:7) arc (297:279:7) -- (279:7.5) arc (279:297:7.5) -- cycle;
\draw[black] (279:5) -- (279:7.7);
\draw (279:8) node[font=\large,black] {$p$};
\draw[black] (297:5) -- (297:7.7);
\draw (297:8) node[font=\large,black] {$q$};

\draw[draw=black,rounded corners=2mm] (177:7) arc (177:159:7) -- (159:7.5) arc (159:177:7.5) -- cycle;
\draw (159:8) node[font=\large,black] {$p\alpha^{-10}$};
\draw (177:8) node[font=\large,black] {$q\alpha^{-10}\phantom{-}$};

\draw[draw=black,rounded corners=2mm] (405:7) arc (405:387:7) -- (387:7.5) arc (387:405:7.5) -- cycle;
\draw (387:8) node[font=\large,black] {$p\alpha^9$};
\draw (405:8) node[font=\large,black] {$q\alpha^9$};
\end{tikzpicture}
\]
\caption{An accompaniment to the proof of Proposition~\ref{prop:infinite}} \label{fig:infinite}
\end{figure}

We now turn to elements of finite order. Here our calculations are more detailed and we describe elements of $T$ with notation from Section~\ref{ss:p_notation}. For what remains, fix $\zeta \in T$ as the specific element $(00, 01, 10, 11) \mapsto (0, 100, 101, 11)$, which is given as a tree pair in Figure~\ref{fig:zeta}.

\begin{proposition} \label{prop:finite}
Let $1 \neq \alpha \in T$ have finite order. Then there exists $\gamma \in T$ such that $T = \< \alpha, \zeta^\gamma \>$.
\end{proposition}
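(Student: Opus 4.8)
The plan is to reduce, up to conjugacy, to one explicit torsion element for each possible order, and then to follow the route sketched in the introduction: realise an $F$-in-the-box inside $\langle\alpha,\zeta^\gamma\rangle$ by writing down two explicit words in $\alpha$ and $\zeta^\gamma$ and verifying that they generate it via Golan's criterion (Lemma~\ref{lem:generation}), and finally cover $\Sone$ by enough conjugates of that box to apply Lemma~\ref{lem:small_support}. For the reduction: a nontrivial finite-order element of $T$ acts freely on $\Sone$, hence is conjugate in $\mathrm{Homeo}^+(\Sone)$ to a rational rotation, so $\alpha$ has some order $n\geq 2$ and a rotation number $k/n$ with $\gcd(k,n)=1$. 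For each $n$ I would fix an explicit order-$n$ model $\alpha_n\in T$ of rotation number $1/n$ all of whose $n$ fundamental domains are dyadic intervals --- for instance the ``caterpillar shift'' with reduced tree pair $(0,10,1^20,\dots,1^{n-1})\mapsto(10,1^20,\dots,1^{n-1},0)$. By the conjugacy classification of torsion in $T$ (rotation number being a complete invariant, by a standard equivariance argument), $\alpha$ is $T$-conjugate to $\alpha_n^{k}$; since $\gcd(k,n)=1$ we have $\alpha_n\in\langle\alpha_n^{k}\rangle$, so $\langle\alpha_n,\zeta^\epsilon\rangle\leq\langle\alpha_n^{k},\zeta^\epsilon\rangle$ for every $\epsilon\in T$. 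Hence it is enough to produce, for each $n\geq 2$, some $\epsilon\in T$ with $\langle\alpha_n,\zeta^\epsilon\rangle=T$, after which conjugating $\epsilon$ by an element carrying $\alpha_n^{k}$ to $\alpha$ yields the required $\gamma$. So from now on assume $\alpha=\alpha_n$.

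To build the $F$-in-the-box, write $\sigma=\zeta^\gamma$ for a conjugator $\gamma$ to be determined. I would choose $\gamma$ so that $\sigma$ is supported on a dyadic interval $(r,s)\subsetneq\Sone$ that strictly contains one fundamental domain of $\alpha$ (and spills a little into each neighbouring one), with $\sigma|_{(r,s)}$ a prescribed scaled copy of $\zeta$. Imitating the localisation step of Proposition~\ref{prop:infinite} --- where $\mu_0,\mu_1$ are built from $\eta=\alpha^{-1}\zeta^\gamma$ by conjugating with words in $\alpha$ and $\zeta^\gamma$, using that the translates $\alpha^i$ push the ``overflow'' of $\zeta^\gamma$ onto pairwise disjoint regions where it can be commuted or conjugated away --- I would write down two explicit words $w_1,w_2$ in $\alpha$ and $\sigma$ satisfying $\supt{w_1}\cap\supt{w_2}\subseteq(r,s)$, with explicit restrictions $w_1|_{[r,s]},w_2|_{[r,s]}\in F_{[r,s]}$. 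Since the supports meet only inside $(r,s)$, the restriction map to $[r,s]$ identifies $\langle w_1,w_2\rangle'$ with $\langle w_1|_{[r,s]},w_2|_{[r,s]}\rangle'$, so everything reduces to showing that $w_1|_{[r,s]}$ and $w_2|_{[r,s]}$ generate $F_{[r,s]}$.

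This last claim is exactly what Golan's criterion is for. Transporting $[r,s]$ to $[0,1]$ by a Thompson-like map (Lemma~\ref{lem:interval_t}) makes it a statement about a fixed pair of elements of $F$; the germ conditions in Lemma~\ref{lem:generation}(i) are read straight off the formulas (recall $\zeta'(0^+)=2$, and $\zeta$ has breakpoints of prescribed slope at $\tfrac14$ and $\tfrac34$), so the real work is to compute the Stallings graph $\Delta$ from the reduced tree pairs of $w_1|_{[r,s]}$ and $w_2|_{[r,s]}$, run the caret-identification closure of Section~\ref{ss:p_generation}, and check that it is the graph of Figure~\ref{fig:generation}. Granting this, $F_{[p,q]}=T_{[p,q]}\leq\langle w_1,w_2\rangle'=(T_{[r,s]})'\leq\langle\alpha,\zeta^\gamma\rangle$ for every dyadic $[p,q]$ interior to $[r,s]$, by Lemma~\ref{lem:simple_f} transported to $[r,s]$. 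Choosing $(p,q)$ still large enough to contain a full fundamental domain of $\alpha=\alpha_n$ in its interior, the conjugates $T_{[p,q]}^{\alpha^i}=T_{(p,q)\alpha^i}$ for $0\leq i<n$ all lie in $\langle\alpha,\zeta^\gamma\rangle$, and as the $n$ fundamental domains partition $\Sone$ the open intervals $(p,q)\alpha^i$ cover $\Sone$. Lemma~\ref{lem:small_support} then gives $T=\langle T_{(p,q)\alpha^i}:0\leq i<n\rangle\leq\langle\alpha,\zeta^\gamma\rangle$, completing the proof.

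The obstacle I expect is twofold, both parts lying in the construction of $w_1,w_2$. First, arranging the overflow to cancel so that $\supt{w_1}\cap\supt{w_2}\subseteq(r,s)$, uniformly in $n$: a torsion element of order $n$ admits only $n$ hops, far fewer than the $18$ available in Proposition~\ref{prop:infinite}, so the smallest orders --- certainly $n=2$, and quite possibly $n=3$ --- will likely need a separate, more hands-on argument. Second, the Stallings graph computation must actually be carried out and shown to produce precisely Figure~\ref{fig:generation}; choosing the two box elements so that this holds --- rather than inadvertently generating a proper subgroup of $F_{[r,s]}$ --- is the delicate core of the whole proof.
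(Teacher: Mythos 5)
Your high-level plan agrees with the paper's: reduce to an explicit torsion model, realise an $F$-in-the-box inside $\langle\alpha,\zeta^\gamma\rangle$ using Golan's criterion (Lemma~\ref{lem:generation}), and then cover $\Sone$ by conjugates of that box and apply Lemma~\ref{lem:small_support}. Your reduction step is done slightly differently and is, if anything, cleaner: the paper passes to a power of $\alpha$ so that the rotation number is $1/p$ for a prime $p$, while you pass to a conjugate $\alpha_n^k$ and use $\langle\alpha_n\rangle=\langle\alpha_n^k\rangle$ (for $\gcd(k,n)=1$) to reduce to rotation number $1/n$ directly, for every $n$. Both are valid.

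The genuine gap is that the core of the argument is left unexecuted. You never write down the words $w_1,w_2$, never choose $(r,s)$ or $\gamma$, and never carry out the Stallings $2$-core computation; you explicitly flag both of these as ``the obstacle I expect.'' A proposal that says ``I would write down two words and check Golan's criterion'' is a plan, not a proof --- especially since, as you yourself note, choosing the two elements so that the closure produces exactly Figure~\ref{fig:generation} (rather than a proper subgroup) is the delicate core, and the low-order cases ($n=2,3$) almost certainly need bespoke words. The paper's contribution in Proposition~\ref{prop:finite} is precisely to do this: it picks $\gamma=1$ (no conjugation of $\zeta$ at all) and uses the very short words $\kappa_0=\zeta$, $\kappa_1=\zeta^{(\alpha\zeta)^2}$ (order $2$), $\kappa_0=\zeta\zeta^\alpha,\ \kappa_1=\zeta^\alpha$ (order $3$, where these literally equal $x_0,x_1$), and $\kappa_0=\zeta,\ \kappa_1=\zeta^\alpha$ (order $\geq 5$), and then actually runs the caret-identification algorithm to match Figure~\ref{fig:generation}.

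A secondary point: your proposed mechanism --- conjugate $\zeta$ to be supported in a small interval $(r,s)$ roughly one fundamental domain wide, manage overflow by hop-conjugation as in Proposition~\ref{prop:infinite}, and then pass to $\langle w_1,w_2\rangle'$ via a support-intersection commutator argument --- is more elaborate than what turns out to be necessary, and it is not what the paper does. Because the supports of $\zeta$ and $\zeta^\alpha$ already both lie inside a single proper arc (e.g.\ $(0,\tfrac78)$ in the $p\geq5$ case), no conjugation of $\zeta$ and no commutator trick are needed: one shows $\langle\kappa_0,\kappa_1\rangle$ \emph{equals} the $F$-in-the-box outright. This also means you would not need $18$ hops or anything like them; the whole hop machinery of Lemma~\ref{lem:conjugacy} is irrelevant to the torsion case. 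In short: right strategy, right tools identified, genuinely cleaner reduction, but the proof of the key generation fact is entirely missing, and the localisation scaffolding you propose to build around it is avoidable.
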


\begin{proof}
By replacing $\alpha$ by a power if necessary, we assume that $\alpha$ has rotation number $\frac{1}{p}$ for a prime $p$. In the terminology of Belk and Matucci in \cite{BelkMatucci14}, any two finite order elements in $T$ with the same rotation number $\frac{p}{q}$ (in reduced terms) have their reduced strand diagrams both consisting of a single strand which wraps $p$ times horizontally along the torus while wrapping $q$ times vertically, so \cite[Theorem~2.12]{BelkMatucci14} implies that they are conjugate elements of $T$. In particular, we can replace $\alpha$ by a conjugate if necessary, so we will assume that $\alpha$ is one of:
\begin{enumerate}[(a)]
\item $(0, 1) \mapsto (1, 0)$
\item $(0, 10, 11) \mapsto (10, 11, 0)$
\item $(00, 01, 10, 110, \dots, 1^{p-3}0, 1^{p-2}) \mapsto (01, 10, 110, \dots, 1^{p-3}0, 1^{p-2}, 00)$ for some prime $p \geq 5$.
\end{enumerate}
In all three cases, we will prove that $T = \< \alpha, \zeta \>$.
\vspace{5pt}

\emph{\textbf{Case~(a).} $\alpha = (0, 1) \mapsto (1, 0)$.}\nopagebreak

Let $\kappa_0 = \zeta$ and $\kappa_1 = \zeta^{(\alpha\zeta)^2}$, which are the maps
\begin{gather*}
\kappa_0 = (00,01,10,11) \mapsto (0,100,101,11) \\
\kappa_1 = (00,010,011,100,101,11) \mapsto (00,01,100,101,110,111).
\end{gather*}
We will first show that $\<\kappa_0,\kappa_1\> = T_{[0,1]} = F$.

To do this, we will apply the generation criterion of Golan as given in Lemma~\ref{lem:generation}. In particular, we need to show that conditions~(i) and~(ii) of Lemma~\ref{lem:generation} are satisfied. For condition~(i), it suffices to check that $\mu = \kappa_0$, $\nu = \kappa_1^{-1}$, $\xi = \kappa_1$ and $x = \frac{1}{4}$ satisfy the required conditions. We now turn to condition~(ii), for which we must compute the directed graph $\Delta = \Delta(\kappa_0,\kappa_1)$. We follow the procedure in Section~\ref{ss:p_generation}. First note that the tree pairs for $\kappa_0$ and $\kappa_1$ corresponding to the expressions given above are reduced. Next we compute the equivalence relation $\sim_0$ on the vertex set of all four trees in the tree pairs, which we present in Figure~\ref{fig:finite_a_sim_0} by labelling vertices according to their equivalence class. Now we repeatedly modify the equivalence relation as described in Section~\ref{ss:p_generation} to obtain the equivalence relation $\sim$ shown in Figure~\ref{fig:finite_a_sim}. Finally, this yields the directed graph $\Delta$ in Figure~\ref{fig:finite_a_delta}, which matches the graph in Figure~\ref{fig:generation}, as required. Therefore, Lemma~\ref{lem:generation} implies that $F = \< \kappa_0, \kappa_1 \>$. 

In particular, $T_{[0,1]} = F \leq \< \alpha, \zeta \>$. Since $0\alpha = \frac{1}{2}$, we have $(0,1)\alpha = (\frac{1}{2},\frac{3}{2})$, so $\Sone = (0,1) \cup (0,1)^\alpha$. Now Lemma~\ref{lem:small_support} implies that $T = \< T_{[0,1]}, (T_{[0,1]})^\alpha \> = \< \alpha, \zeta \>$.

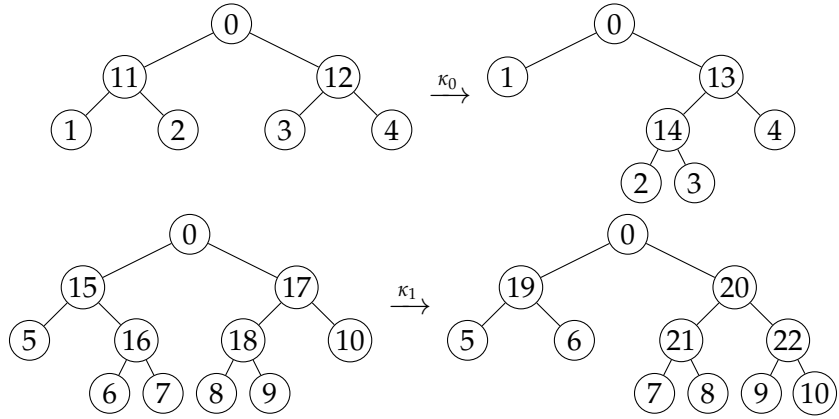
\begin{figure}[ht]
\begin{gather*}
\begin{tikzpicture}
    [ inner sep=1pt,
      minimum size=15pt,
      baseline=-30pt,
      level distance=20pt,
      level 1/.style={sibling distance=80pt},
      level 2/.style={sibling distance=40pt},
      level 3/.style={sibling distance=20pt}
    ]
    \node (root) [circle, draw] {0}
    child {node (0) [circle, draw] {11}
      child {node (00)  [circle, draw] {1}}
      child {node (01) [circle, draw] {2}}}
    child {node (1) [circle, draw] {12}
      child {node (10) [circle, draw] {3}}
      child {node (11) [circle, draw] {4}}};
\end{tikzpicture}
\ \xrightarrow{\,\kappa_0\,} \
\begin{tikzpicture}
    [ inner sep=1pt,
      minimum size=15pt,
      baseline=-30pt,
      level distance=20pt,
      level 1/.style={sibling distance=80pt},
      level 2/.style={sibling distance=40pt},
      level 3/.style={sibling distance=20pt}
    ]
    \node (root) [circle, draw] {0}
    child {node (0) [circle, draw] {1}}
    child {node (1) [circle, draw] {13}
      child {node (10) [circle, draw] {14}
        child {node (100) [circle, draw] {2}}
        child {node (101) [circle, draw] {3}}}
      child {node (11) [circle, draw] {4}}};
\end{tikzpicture} \\
\begin{tikzpicture}
    [ inner sep=1pt,
      minimum size=15pt,
      baseline=-30pt,
      level distance=20pt,
      level 1/.style={sibling distance=80pt},
      level 2/.style={sibling distance=40pt},
      level 3/.style={sibling distance=20pt}
    ]
    \node (root) [circle, draw] {0}
    child {node (0) [circle, draw] {15}
      child {node (00) [circle, draw] {5}}
      child {node (01) [circle, draw] {16}
        child {node (010) [circle, draw] {6}}
        child {node (011) [circle, draw] {7}}}}
    child {node (1) [circle, draw] {17}
      child {node (10) [circle, draw] {18}
        child {node (100) [circle, draw] {8}}
        child {node (101) [circle, draw] {9}}}
      child {node (11) [circle, draw] {10}}};
\end{tikzpicture}
\ \xrightarrow{\,\kappa_1\,} \
\begin{tikzpicture}
    [ inner sep=1pt,
      minimum size=15pt,
      baseline=-30pt,
      level distance=20pt,
      level 1/.style={sibling distance=80pt},
      level 2/.style={sibling distance=40pt},
      level 3/.style={sibling distance=20pt}
    ]
    \node (root) [circle, draw] {0}
    child {node (0) [circle, draw] {19}
      child {node (00) [circle, draw] {5}}
      child {node (01) [circle, draw] {6}}}
    child {node (1) [circle, draw] {20}
      child {node (10) [circle, draw] {21}
        child {node (100) [circle, draw] {7}}
        child {node (101) [circle, draw] {8}}}
      child {node (11) [circle, draw] {22}
        child {node (110) [circle, draw] {9}}
        child {node (111) [circle, draw] {10}}}};
\end{tikzpicture}
\end{gather*}
\caption{The initial relation $\sim_0$ for $\{ \kappa_0, \kappa_1 \}$ in Case~(a) in the proof of Proposition~\ref{prop:finite}}
\label{fig:finite_a_sim_0}
\end{figure}

\begin{figure}[h]
\begin{gather*}
\begin{tikzpicture}
    [ inner sep=1pt,
      minimum size=15pt,
      baseline=-30pt,
      level distance=20pt,
      level 1/.style={sibling distance=80pt},
      level 2/.style={sibling distance=40pt},
      level 3/.style={sibling distance=20pt}
    ]
    \node (root) [circle, draw] {0}
    child {node (0) [circle, draw] {1}
      child {node (00)  [circle, draw] {1}}
      child {node (01) [circle, draw] {2}}}
    child {node (1) [circle, draw] {4}
      child {node (10) [circle, draw] {2}}
      child {node (11) [circle, draw] {4}}};
\end{tikzpicture}
\ \xrightarrow{\,\kappa_0\,} \
\begin{tikzpicture}
    [ inner sep=1pt,
      minimum size=15pt,
      baseline=-30pt,
      level distance=20pt,
      level 1/.style={sibling distance=80pt},
      level 2/.style={sibling distance=40pt},
      level 3/.style={sibling distance=20pt}
    ]
    \node (root) [circle, draw] {0}
    child {node (0) [circle, draw] {1}}
    child {node (1) [circle, draw] {4}
      child {node (10) [circle, draw] {2}
        child {node (100) [circle, draw] {2}}
        child {node (101) [circle, draw] {2}}}
      child {node (11) [circle, draw] {4}}};
\end{tikzpicture} \\
\begin{tikzpicture}
    [ inner sep=1pt,
      minimum size=15pt,
      baseline=-30pt,
      level distance=20pt,
      level 1/.style={sibling distance=80pt},
      level 2/.style={sibling distance=40pt},
      level 3/.style={sibling distance=20pt}
    ]
    \node (root) [circle, draw] {0}
    child {node (0) [circle, draw] {1}
      child {node (00) [circle, draw] {1}}
      child {node (01) [circle, draw] {2}
        child {node (010) [circle, draw] {2}}
        child {node (011) [circle, draw] {2}}}}
    child {node (1) [circle, draw] {4}
      child {node (10) [circle, draw] {2}
        child {node (100) [circle, draw] {2}}
        child {node (101) [circle, draw] {2}}}
      child {node (11) [circle, draw] {4}}};
\end{tikzpicture}
\ \xrightarrow{\,\kappa_1\,} \
\begin{tikzpicture}
    [ inner sep=1pt,
      minimum size=15pt,
      baseline=-30pt,
      level distance=20pt,
      level 1/.style={sibling distance=80pt},
      level 2/.style={sibling distance=40pt},
      level 3/.style={sibling distance=20pt}
    ]
    \node (root) [circle, draw] {0}
    child {node (0) [circle, draw] {1}
      child {node (00) [circle, draw] {1}}
      child {node (01) [circle, draw] {2}}}
    child {node (1) [circle, draw] {4}
      child {node (10) [circle, draw] {2}
        child {node (100) [circle, draw] {2}}
        child {node (101) [circle, draw] {2}}}
      child {node (11) [circle, draw] {4}
        child {node (110) [circle, draw] {2}}
        child {node (111) [circle, draw] {4}}}};
\end{tikzpicture}
\end{gather*}
\caption{The final relation $\sim$ for $\{ \kappa_0, \kappa_1 \}$ in Case~(a) in the proof of Proposition~\ref{prop:finite}}
\label{fig:finite_a_sim}
\end{figure}

\begin{figure}[ht]
\centering
\begin{tikzpicture} 
[scale=0.9, ->, >=stealth', shorten >=1pt, auto, semithick, node distance=1cm, every state/.style={inner sep=1pt,minimum size=15pt}]
\node[state] (0)                      {0};
\node[state] (1) [below left  = of 0] {1};
\node[state] (2) [below right = of 0] {4};
\node[state] (3) [below right = of 1] {2};
\path (0) edge              node [swap] {0} (1)
          edge              node        {1} (2)
      (1) edge [loop left]  node        {0} (1)
          edge              node [swap] {1} (3)
      (2) edge              node        {0} (3)
          edge [loop right] node        {1} (2)
      (3) edge [loop below] node        {0,1} (3);
\end{tikzpicture}
{\setlength{\abovecaptionskip}{2pt}
\caption{The directed graph $\Delta(\kappa_0, \kappa_1)$ in Case~(a) in the proof of Proposition~\ref{prop:finite}}
\label{fig:finite_a_delta}
}
\end{figure}
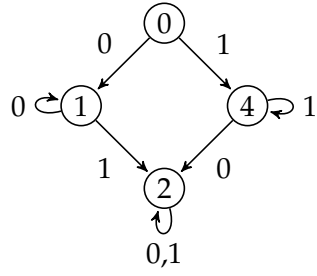

\emph{\textbf{Case~(b).}  $\alpha = (0, 10, 11) \mapsto (10, 11, 0)$.}\nopagebreak

This case is straightforward. Observe that
\begin{gather*}
\zeta^\alpha = (0, 100, 101, 11) \mapsto (0, 10, 110, 111) = x_1  \\
\zeta\zeta^\alpha = (00, 01, 10, 11) \mapsto (0, 10, 110, 111) = (00, 01, 1) \mapsto (0, 10, 11) = x_0.
\end{gather*} 
Therefore, $T_{[0,1]} = F = \< x_0, x_1 \> \leq \< \alpha, \zeta \>$. As in Case~(a), $0\alpha = \frac{1}{2}$, so $\Sone = (0,1) \cup (0,1)^\alpha$, and Lemma~\ref{lem:small_support} implies that $T = \< T_{[0,1]}, (T_{[0,1]})^\alpha \> = \< \alpha, \zeta \>$. 
\vspace{5pt}

\emph{\textbf{Case~(c).} $\alpha = (00, 01, 10, 110, \dots, 1^{p-3}0, 1^{p-2}) \mapsto (01, 10, 110, \dots, 1^{p-3}0, 1^{p-2}, 00)$ for $p \geq 5$.} \nopagebreak

Let $\kappa_0 = \zeta$ and $\kappa_1 = \zeta^\alpha$, which are the maps
\begin{gather*}
\kappa_0 = (00,01,10,11) \mapsto (0,100,101,11) \\
\kappa_1 = (00,010,011,10,110,111) \mapsto (00,01,10,1100,1101,111).
\end{gather*}
 We will first show that $\<\kappa_0,\kappa_1\> = T_{[0,\frac{7}{8}]}$. Let $\tau\:(0,\frac{7}{8}) \to (0,1)$ be defined for $x \in (0,\frac{7}{8})$ as 
\[
x\tau = \left\{ 
\begin{array}{ll}
x              & \text{if $x \leq \frac{3}{4}$} \\[3pt]
2x-\frac{3}{4} & \text{if $x > \frac{3}{4}$.}
\end{array}
\right.
\]
Thus, to prove $\< \kappa_0, \kappa_1 \> = T_{[0,\frac{7}{8}]}$, we will prove $\< \kappa_0^\tau, \kappa_1^\tau \> = (T_{[0,\frac{7}{8}]})^\tau = F$. Since $\supt{\kappa_0} = (0,\frac{3}{4})$, we have $\kappa_0^\tau = \kappa_0$, and it is easy to verify that $\kappa_1^\tau = (00,010,011,1) \mapsto (00,01,10,11)$. We will prove $\< \kappa_0^\tau, \kappa_1^\tau \> = (T_{[0,\frac{7}{8}]})^\tau = F$ by applying Lemma~\ref{lem:generation}. Condition~(i) is verified by choosing $\mu = \kappa_0^\tau$, $\nu = (\kappa_1^\tau)^{-1}$, $\xi = \kappa_1^\tau$ and $x = \frac{1}{4}$, and condition~(ii) is verified by computing $\Delta(\kappa_0^\tau,\kappa_1^\tau)$ in the same way as in Case~(a) and noting that it is the directed graph in Figure~\ref{fig:generation}. Therefore, Lemma~\ref{lem:generation} implies that $F = \< \kappa_0^\tau, \kappa_1^\tau \>$, which in turn implies that $T_{[0,\frac{7}{8}]} = \<\kappa_0, \kappa_1 \> \leq \< \alpha, \zeta \>$. Noting that $\Sone = \bigcup_{i \in \Z} (0,\frac{7}{8})\alpha^i$, by Lemma~\ref{lem:small_support}, we conclude that $\< \alpha, \zeta \> = \< T_{[0,\frac{7}{8}]}, \alpha \> = T$. 
\end{proof}

Theorem~\ref{t:theorem} now follows by combining Propositions~\ref{prop:infinite} and~\ref{prop:finite}.

\begin{acknowledgements}
The work in this paper emerged from the Focused Research Workshop \emph{Generating Thompson Groups} sponsored by the Heilbronn Institute for Mathematical Research. The third author was partially supported by the NSF grant DMS--2005297. We are also grateful to Matthew G. Brin and the anonymous referees for useful comments.
\end{acknowledgements}

\vspace{11pt}

\noindent Collin Bleak, \texttt{collin.bleak@st-andrews.ac.uk} \newline
University of St Andrews, St Andrews, Scotland, UK, KY16 9SS

\noindent Scott Harper, \texttt{scott.harper@st-andrews.ac.uk} \newline
University of Bristol \& Heilbronn Institute for Mathematical Research, Bristol, UK, BS8 1UG  \newline
\emph{current institution:} University of St Andrews, St Andrews, Scotland, UK, KY16 9SS

\noindent Rachel Skipper, \texttt{skipper.rachel.k@gmail.com} \newline
The Ohio State University, Columbus, Ohio, USA, 43210 \newline
\emph{current institution:} \'Ecole normale sup\'erieure, Paris, France, 75230 CEDEX 05

\begin{thebibliography}{10}
\bibitem{BelkMatucci14}
J.~Belk and F.~Matucci, \emph{Conjugacy and dynamics in {T}hompson's groups}, 
  Geom. Dedicata \textbf{169} (2014), 239--261.

\bibitem{bw75}
J.~L. Brenner and J.~Wiegold, \emph{Two-generator groups. {I}}, Michigan Math.
  J. \textbf{22} (1975), 53--64.

\bibitem{brown3}
K.~S. Brown, \emph{Finiteness properties of groups}, in \emph{Proceedings of
  the Northwestern conference on cohomology of groups (Evanston, Ill., 1985)},
  vol.~44, 1987, 45--75.

\bibitem{brown4}
K.~S. Brown, \emph{The geometry of finitely presented infinite simple groups},
  in \emph{Algorithms and classification in combinatorial group theory
  (Berkeley, CA, 1989)}, Math. Sci. Res. Inst. Publ., vol.~23, Springer, New
  York, 1992, 121--136.

\bibitem{browngeoghegan1}
K.~S. Brown and R.~Geoghegan, \emph{Cohomology with free coefficients of the
  fundamental group of a graph of groups}, Comment. Math. Helv. \textbf{60}
  (1985), 31--45.

\bibitem{bgh21}
T.~C. Burness, R.~M. Guralnick and S.~Harper, \emph{The spread of a finite
  group}, Ann. of Math. (2) \textbf{193} (2021), 619--687.

\bibitem{cfp}
J.~Cannon, W.~Floyd and W.~Parry, \emph{Introductory notes on {R}ichard
  {T}hompson's groups}, Enseign. Math. (2) \textbf{42} (1996), 215--256.

\bibitem{cox22}
C. G. Cox, \emph{On the spread of infinite groups}, Proc. Edinb. Math. Soc. 
  (2) \textbf{65} (2022), 214--228.

\bibitem{dh20}
C.~Donoven and S.~Harper, \emph{Infinite {$\frac 32$}-generated groups}, Bull.
  Lond. Math. Soc. \textbf{52} (2020), 657--673.

\bibitem{Dydak1}
J.~Dydak, \emph{1-movable continua need not be pointed 1-movable}, Bull. Acad.
  Polon. Sci. S\'{e}r. Sci. Math. Astronom. Phys. \textbf{25} (1977), 559--562.

\bibitem{Dydak2}
J.~Dydak, \emph{A simple proof that pointed {FANR}-spaces are regular
  fundamental retracts of {ANR}'s}, Bull. Acad. Polon. Sci. S\'{e}r. Sci. Math.
  Astronom. Phys. \textbf{25} (1977), 55--62.

\bibitem{FreydHeller}
P.~Freyd and A.~Heller, \emph{Splitting homotopy idempotents. {II}}, J. Pure
  Appl. Algebra \textbf{89} (1993), 93--106.

\bibitem{FunarKapoudjian}
L.~Funar and C.~Kapoudjian, \emph{The braided {P}tolemy-{T}hompson group is
  finitely presented}, Geom. Topol. \textbf{12} (2008), 475--530.

\bibitem{GhysSergiescu}
{\'E}.~Ghys and V.~Sergiescu, \emph{Sur un groupe remarquable de
  diff\'eomorphismes du cercle}, Comment. Math. Helv. \textbf{62} (1987),
  185--239.

\bibitem{GolanGeneration}
G.~Golan, \emph{The generation problem in {T}hompson's group {$F$}}, Memoirs of
  the AMS, to appear  (2021), 1--85.

\bibitem{GolanSapir}
G.~Golan and M.~Sapir, \emph{On subgroups of {R}. {T}hompson's group {$F$}},
  Trans. Amer. Math. Soc. \textbf{369} (2017), 8857--8878.

\bibitem{gubaSapirDiagramGroups}
V.~Guba and M.~Sapir, \emph{Diagram groups}, Mem. Amer. Math. Soc. \textbf{130}
  (1997), viii+117.

\bibitem{GubaSimple86}
V.~S. Guba, \emph{A finitely generated simple group with free {$2$}-generated
  subgroups}, Sibirsk. Mat. Zh. \textbf{27} (1986), 50--67, 204.

\bibitem{gk00}
R.~M. Guralnick and W.~M. Kantor, \emph{Probabilistic generation of finite
  simple groups}, J. Algebra \textbf{234} (2000), 743--792.

\bibitem{LochakSchneps}
P.~Lochak and L.~Schneps, \emph{The universal {P}tolemy-{T}eichm\"{u}ller
  groupoid}, in \emph{Geometric {G}alois actions, 2}, London Math. Soc. Lecture
  Note Ser., vol. 243, Cambridge Univ. Press, Cambridge, 1997, 325--347.

\bibitem{thompsonmckenzie}
R.~McKenzie and R.~J. Thompson, \emph{An elementary construction of unsolvable
  word problems in group theory}, in \emph{Word problems: decision problems and
  the Burnside problem in group theory (Conf., Univ. California, Irvine, Calif.
  1969; dedicated to Hanna Neumann)}, Studies in Logic and the Foundations of
  Math., vol.~71, North-Holland, Amsterdam, 1973, 457--478.

\bibitem{Nikkel2019}
J.~Nikkel, \emph{On Extending {S}tallings 2-Cores of Diagram Groups to {R}.
  {T}hompson’s Group {T} and {J}ones Subgroup of {T}}, {PhD} dissertation,
  Vanderbilt University, 2019.

\bibitem{steinberg62}
R.~Steinberg, \emph{Generators for simple groups}, Canadian J. Math.
  \textbf{14} (1962), 277--283.

\bibitem{ThompsonHandwritten}
R.~J. Thompson, Widely circulated handwritten notes  (1965), 1--11.

\bibitem{Thompson76}
R.~J. Thompson, \emph{Embeddings into finitely generated simple groups which
  preserve the word problem}, in \emph{Word problems, {II} ({C}onf. on
  {D}ecision {P}roblems in {A}lgebra, {O}xford, 1976)}, Studies in Logic and
  the Foundations of Mathematics, vol.~95, North-Holland, Amsterdam-New York,
  1980, 401--441.

\end{thebibliography}
\end{document}